\numberwithin{equation}{section}
\numberwithin{figure}{section}
\theoremstyle{plain}
\newtheorem{thm}{\protect\theoremname}
\theoremstyle{remark}
\newtheorem{rem}[thm]{\protect\remarkname}
\theoremstyle{definition}
\newtheorem{defn}[thm]{\protect\definitionname}
\theoremstyle{plain}
\newtheorem{lem}[thm]{\protect\lemmaname}
\theoremstyle{plain}
\newtheorem{cor}[thm]{\protect\corollaryname}
\theoremstyle{plain}
\newtheorem{prop}[thm]{\protect\propositionname}
\providecommand{\lemmaname}{Lemma}
\providecommand{\remarkname}{Remark}
\providecommand{\theoremname}{Theorem}
\providecommand{\corollaryname}{Corollary}
\providecommand{\definitionname}{Definition}
\providecommand{\lemmaname}{Lemma}
\providecommand{\propositionname}{Proposition}
\providecommand{\remarkname}{Remark}
\providecommand{\theoremname}{Theorem}
\begin{document}
\title{Generalized spherical mean value operators on Euclidean space}
\author{Yasunori Okada and Hideshi Yamane}
\address{{\small{}Institute of Management and Information Technologies, Chiba
University}\\
{\small{}1-33 Yayoicho, Inage-ku, Chiba, 263-8522 Japan}}
\address{{\small{}Department of Mathematical Sciences, Kwansei Gakuin University
}\\
{\small{}2-1, Gakuen, Sanda, Hyogo 669-1337, Japan}}
\email{{\small{}okada@math.s.chiba-u.ac.jp, yamane@kwansei.ac.jp}}
\begin{abstract}
We consider the Neumann version of the spherical mean value operator
and its variants in the space of smooth functions, distributions and
compactly supported ones. Surjectivity and range characterization
issues are addressed from the viewpoint of convolution equations.
\\
AMS subject classification: 45E10, 47G10
\end{abstract}

\keywords{mean value operators, convolution equations}
\thanks{The first author is supported by JSPS KAKENHI Grant Number 16K05170.}

\maketitle
\markboth{Okada-Yamane}{Generalized mean value operators}\tableofcontents

\section{Introduction}

Convolution equations are natural extensions of linear partial differential
equations with constant coefficients. Malgrange \cite{MR86990} 
in 1955 proved that the convolution equation $\mu*g=f$, where $\mu\ne0$
is an arbitrary analytic functional, has an entire solution $g$ for
any entire function $f$. Ehrenpreis \cite{MR119082} in 1956 studied
equations with distribution kernels on the spaces differentiable functions,
distributions, and real analytic functions. Such equations have been
studied by many authors on various situations, for example, on spaces
of holomorphic functions on convex domains, those with growth conditions
near the boundary, hyperfunctions, Fourier hyperfunctions, etc. See
H\"ormander \cite{MR141984}, Korobe\u{\i}nik \cite{MR0226148},
Kawai \cite{MR0298200}, Ishimura-Okada \cite{MR1294464}, Abanin-Ishimura-Khoi
\cite{MR2890341}, Langenbruch \cite{MR3066407} and the references
therein. Necessary or sufficient conditions for solvability are often
written in terms of the Fourier (or Laplace) transform of the kernel. 

Lim \cite{Lim} proved that the spherical mean value operators on
Euclidean and hyperbolic spaces are  surjective convolution operators.
He employed techniques devised by Ehrenpreis and H\"ormander. Christensen-Gonzarez-Kakehi
\cite{Kakehi} extended this result to the general case of noncompact
symmetric spaces. In \cite{Kakehi} and \cite{Lim}, the operators
involve Dirichet boundary values on spheres. So it is natural to consider
the Neumann case. In the present paper, we study the surjectivity
of the Neumann mean value operator on Euclidean space and higher order
variants. We show that they are surjective on the space of smooth
functions and that of distributions. Moreover, we characterize the
ranges in the case of compact supports.  

The proofs basically follow those in \cite{Lim}, but we have simplified
some parts of the arguments. 

Mean value operators are studied in various settings. See, for example,
Agranovsky et al. \cite{Agranovsky} and Antipov et al. \cite{Antipov}.
It may be possible to prove the Neumann versions of these results.

\section{Distributions, convolution and the Fourier transform}

Let $S(x,r)$ be the $n-1$ dimensional sphere centered at $x\in\mathbb{R}^{n}$
with radius $r>0$. The reciprocal of its surface area is denoted
by $c_{r}$. We have 
\[
c_{r}=\frac{1}{2}\,(n=1),\qquad c_{r}=\frac{\Gamma(n/2)r^{n-1}}{2\pi^{n/2}}\,(n\ge2).
\]
The spherical mean value operator $M_{r}$ is defined by 
\[
M_{r}u(x)=\begin{cases}
{\displaystyle \frac{1}{2}\left\{ u(x-r)+u(x+r)\right\} } & (n=1),\\
{\displaystyle c_{r}\int_{S(x,r)}u(y)\,dS_{x,r}(y)} & (n\ge2),
\end{cases}
\]
where $dS_{x,r}$ is the surface area measure of $S(x,r)$. Let $\delta_{S(0,r)}$
be the distribution defined by 
\begin{align*}
\delta_{S(0,r)}\colon\mathcal{\mathcal{C}}^{\infty}(\mathbb{R}^{n}) & \to\mathbb{C},\\
u(x) & \mapsto M_{r}u(0).
\end{align*}
Then we have 
\[
M_{r}u(x)=\delta_{S(0,r)}*u(x),
\]
where $*$ denotes convolution.

Let $n=n_{y}$ be the outer unit normal of $S(x,r)$ at $y\in S(x,r)$.
We introduce the Neumann version of the spherical mean value operator
and its generalization by 
\begin{align*}
M_{r}^{(\ell)}u(x) & =c_{r}\int_{S(x,r)}\left(\frac{\partial}{\partial n_{y}}\right)^{\ell}u(y)\,dS_{x,r}(y)\\
 & =c_{1}\int_{S(0,1)}\frac{\partial^{\ell}}{\partial r^{\ell}}u(x+r\omega)\,dS_{0,1}(\omega)
\end{align*}
for a smooth function $u(x)$ on $\mathbb{R}^{n}$ and a non-negative
integer $\ell$. It is trivial that $M_{r}^{(0)}=M_{r}$.

The Fourier transform of $u$ is defined by $\hat{u}(\xi)=u_{x}(e^{-i\langle x,\xi\rangle})=\langle u(x),e^{-i\langle x,\xi\rangle}\rangle$.
In some cases, it is denoted by $(u)^{\wedge}$.
\begin{thm}
We have
\begin{equation}
M_{r}^{(\ell)}u(x)=\frac{\partial^{\ell}\delta_{S(0,r)}}{\partial r^{\ell}}*u(x)\label{eq:Nrconvol}
\end{equation}
for a smooth function $u(x)$ on $\mathbb{R}^{n}$ and a non-negative
integer $\ell$.
\end{thm}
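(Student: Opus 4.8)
The plan is to reduce the statement to the already-established identity $M_{r}u(x)=\delta_{S(0,r)}*u(x)$ (the case $\ell=0$) by differentiating $\ell$ times in the parameter $r$. Before doing so I would make precise the meaning of $\partial^{\ell}\delta_{S(0,r)}/\partial r^{\ell}$. The assignment $r\mapsto\delta_{S(0,r)}$ is a smooth map from $(0,\infty)$ into $\mathcal{E}'(\mathbb{R}^{n})$, because for each test function $\phi$ the scalar function
\[
r\mapsto\langle\delta_{S(0,r)},\phi\rangle=c_{1}\int_{S(0,1)}\phi(r\omega)\,dS_{0,1}(\omega)
\]
is infinitely differentiable in $r$ (by smoothness of $\phi$ and compactness of $S(0,1)$). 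The derivative $\partial^{\ell}\delta_{S(0,r)}/\partial r^{\ell}$ is then the compactly supported distribution characterized weakly by $\langle\partial_{r}^{\ell}\delta_{S(0,r)},\phi\rangle=\partial_{r}^{\ell}\langle\delta_{S(0,r)},\phi\rangle$.

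Next I would differentiate the two sides of the $\ell=0$ identity separately and compare. Writing $M_{r}u(x)$ in the averaged form $M_{r}u(x)=c_{1}\int_{S(0,1)}u(x+r\omega)\,dS_{0,1}(\omega)$ and differentiating $\ell$ times under the integral sign (legitimate since $u$ is smooth and $S(0,1)$ is compact), the second defining formula for $M_{r}^{(\ell)}$ gives
\[
\frac{\partial^{\ell}}{\partial r^{\ell}}M_{r}u(x)=c_{1}\int_{S(0,1)}\frac{\partial^{\ell}}{\partial r^{\ell}}u(x+r\omega)\,dS_{0,1}(\omega)=M_{r}^{(\ell)}u(x).
\]
On the convolution side I would use $(\delta_{S(0,r)}*u)(x)=\langle\delta_{S(0,r)}(y),u(x-y)\rangle$; since $r$ enters only through the distribution factor, the weak definition of the parametric derivative lets the differentiation pass onto $\delta_{S(0,r)}$, so that
\[
\frac{\partial^{\ell}}{\partial r^{\ell}}\bigl(\delta_{S(0,r)}*u\bigr)(x)=\bigl\langle\partial_{r}^{\ell}\delta_{S(0,r)}(y),u(x-y)\bigr\rangle=\Bigl(\frac{\partial^{\ell}\delta_{S(0,r)}}{\partial r^{\ell}}*u\Bigr)(x).
\]
Since the $\ell=0$ identity holds for every $r>0$ and both sides are smooth in $r$, their $\ell$-th $r$-derivatives coincide, which is exactly \eqref{eq:Nrconvol}.

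The routine content consists of the two interchanges of $\partial_{r}$ with an integration and with a distributional pairing. The step I expect to demand the most care is the latter: verifying that $r\mapsto\delta_{S(0,r)}$ is genuinely differentiable as an $\mathcal{E}'$-valued map and that $\partial_{r}^{\ell}\langle\delta_{S(0,r)},u(x-\cdot)\rangle=\langle\partial_{r}^{\ell}\delta_{S(0,r)},u(x-\cdot)\rangle$, noting that $u(x-\cdot)$ need not be compactly supported. This is resolved by restricting to a fixed bounded neighborhood of the spheres in play, on which only the values of $u$ enter, thereby reducing the claim to differentiability of the single smooth scalar function $r\mapsto\langle\delta_{S(0,r)},u(x-\cdot)\rangle$.
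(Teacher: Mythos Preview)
Your proposal is correct and follows exactly the same route as the paper's proof: pull the $r$-derivative outside the spherical integral to identify $M_{r}^{(\ell)}u=\partial_{r}^{\ell}(M_{r}u)$, invoke the $\ell=0$ identity $M_{r}u=\delta_{S(0,r)}*u$, and then pass $\partial_{r}^{\ell}$ onto the distribution factor in the convolution. The paper presents this as a single four-step chain of equalities without commenting on the interchanges; your version simply supplies the justifications (smooth dependence of $r\mapsto\delta_{S(0,r)}$ in $\mathcal{E}'$, differentiation under the integral and under the pairing) that the paper leaves implicit.
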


\begin{proof}
\begin{align*}
M_{r}^{(\ell)}u(x) & =\frac{\partial^{\ell}}{\partial r^{\ell}}c_{1}\int_{S(0,1)}u(x+r\omega)dS_{1}(\omega)\\
 & =\frac{\partial^{\ell}}{\partial r^{\ell}}(M_{r}u)(x)=\frac{\partial^{\ell}}{\partial r^{\ell}}\left[\delta_{S(0,r)}*u(x)\right]=\frac{\partial^{\ell}\delta_{S(0,r)}}{\partial r^{\ell}}*u(x)\qedhere
\end{align*}
\end{proof}
\begin{rem}
The convolution operator $(\partial^{\ell}\delta_{S(0,r)}/\partial r^{\ell})*$
is well-defined on $\mathcal{D}'(\mathbb{R}^{n})$. We will study
$(\partial^{\ell}\delta_{S(0,r)}/\partial r^{\ell})*$ as endomorphisms
on $\mathcal{C}^{\infty}(\mathbb{R}^{n})$, $\mathcal{D}'(\mathbb{R}^{n})$,
$\mathcal{E}'(\mathbb{R}^{n})$ and $\mathcal{\mathcal{C}}_{0}^{\infty}(\mathbb{R}^{n})$.
\end{rem}

\section{Invertibility}
\begin{thm}[{\cite[Theorem 16.3.9, 16.3.10]{Hormander2}}]
\label{thm:invertible}For $u\in\mathcal{E}'(\mathbb{R}^{n})$, the
following statements are equivalent.

(i) There is a constant $A>0$ such that we have
\[
\sup\left\{ |\hat{u}(\zeta)|\,;\,\zeta\in\mathbb{C}^{n},|\zeta-\xi|<A\log(2+|\xi|)\right\} >(A+|\xi|)^{-A}
\]
for any $\xi\in\mathbb{R}^{n}$. 

(ii) If $w\in\mathcal{E}'(\mathbb{R}^{n})$ and $\hat{w}/\hat{u}$
is an holomorphic function, then $\hat{w}/\hat{u}$ is the Fourier
transform of a distribution in $\mathcal{E}'(\mathbb{R}^{n})$. 

(iii) If $v\in\mathcal{E}'(\mathbb{R}^{n})$ satisfies $u*v\in\mathcal{\mathcal{C}}^{\infty}(\mathbb{R}^{n})$,
then $v\in\mathcal{\mathcal{C}}^{\infty}(\mathbb{R}^{n})$.
\end{thm}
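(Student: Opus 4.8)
The plan is to use the Paley--Wiener--Schwartz theorem as a dictionary between $\mathcal{E}'(\mathbb{R}^{n})$ and entire functions: for $u\in\mathcal{E}'(\mathbb{R}^{n})$ the transform $\hat u$ extends to an entire function on $\mathbb{C}^{n}$, and membership in $\mathcal{E}'$ is equivalent to a bound $|\hat u(\zeta)|\le C(1+|\zeta|)^{N}e^{B|\operatorname{Im}\zeta|}$. Condition (i) is a matching \emph{lower} bound, valid only on logarithmic neighbourhoods of the real points. With this translation I would establish the two equivalences (i)$\Leftrightarrow$(ii) and (i)$\Leftrightarrow$(iii), the implication (i)$\Rightarrow$(ii) carrying almost all of the analytic difficulty.

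For (i)$\Rightarrow$(ii), set $F=\hat w/\hat u$, which is entire by hypothesis; the goal is to show $F$ obeys a Paley--Wiener--Schwartz bound, so that $F=\hat v$ for some $v\in\mathcal{E}'(\mathbb{R}^{n})$. Fixing $\xi\in\mathbb{R}^{n}$, condition (i) furnishes a point $\zeta_{0}$ with $|\zeta_{0}-\xi|<A\log(2+|\xi|)$ and $|\hat u(\zeta_{0})|>(A+|\xi|)^{-A}$. Because $|\operatorname{Im}\zeta_{0}|$ is only logarithmically large, the exponential factor $e^{B|\operatorname{Im}\zeta_{0}|}$ coming from the upper bound on $\hat w$ is absorbed into a power of $(1+|\xi|)$, so that $|F(\zeta_{0})|$ is polynomially bounded. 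The crux is to pass from this estimate at the single point $\zeta_{0}$ to a bound on a fixed-radius neighbourhood of $\xi$. For this I would invoke a minimum-modulus estimate of Cartan type: on a ball of radius comparable to $A\log(2+|\xi|)$, the upper bound on $\hat u$ together with the lower bound at $\zeta_{0}$ keeps $|\hat u|$ bounded below off a thin exceptional union of small balls, on which $F$ is therefore controlled; since $F$ is holomorphic on the whole ball, the maximum principle removes the exceptional set and yields the bound at $\xi$. Tracking the constants uniformly in $\xi$ gives the polynomial growth on $\mathbb{R}^{n}$, and a Phragm\'en--Lindel\"of argument controls the exponential type in the imaginary directions, whence $F=\hat v$ with $v\in\mathcal{E}'$. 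I expect this minimum-modulus-plus-maximum-principle propagation to be the main obstacle, since everything else is bookkeeping with Paley--Wiener--Schwartz bounds.

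For (i)$\Rightarrow$(iii) I would first produce, by the same division machinery applied in $\mathcal{D}'$ rather than $\mathcal{E}'$, a fundamental solution $E\in\mathcal{D}'(\mathbb{R}^{n})$ with $u*E=\delta$. Then, given $v\in\mathcal{E}'$ with $u*v\in\mathcal{C}^{\infty}(\mathbb{R}^{n})$, note that $u*v$ has compact support and is smooth, and that $v=\delta*v=(u*E)*v=E*(u*v)$, the associativity being legitimate since $u$ and $v$ both have compact support. As the convolution of a distribution with a compactly supported smooth function is smooth, we conclude $v\in\mathcal{C}^{\infty}(\mathbb{R}^{n})$.

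Finally, the converses (ii)$\Rightarrow$(i) and (iii)$\Rightarrow$(i) I would argue by contraposition. If (i) fails there is a sequence $\xi_{k}\to\infty$ on whose logarithmic neighbourhoods $\sup|\hat u|$ decays faster than every fixed power of $(1+|\xi_{k}|)^{-1}$; using this rapid decay I would build a $w\in\mathcal{E}'$ with $\hat w/\hat u$ entire but violating every Paley--Wiener--Schwartz bound, contradicting (ii), and in parallel a $v\in\mathcal{E}'$ with $u*v$ smooth but $v\notin\mathcal{C}^{\infty}$, contradicting (iii). These constructions are delicate but routine once the failure of (i) has been quantified along $\{\xi_{k}\}$.
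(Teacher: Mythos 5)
This theorem is quoted verbatim from H\"ormander \cite{Hormander2} (Theorems 16.3.9 and 16.3.10); the paper supplies no proof of it, so there is no internal argument to compare yours against, and your outline is essentially the classical Ehrenpreis--H\"ormander one. The forward directions are sound in outline, with two caveats. First, in (i)$\Rightarrow$(ii) the Cartan minimum-modulus theorem is a one-variable statement, so it must be applied on the complex line joining the centre of the ball to the good point $\zeta_{0}$; and a polynomial bound on $\mathbb{R}^{n}$ alone does not force exponential type (consider $e^{-\zeta^{2}}$), so the division estimate has to be run at complex centres as well rather than recovered afterwards by Phragm\'en--Lindel\"of -- fortunately any finite type suffices here since no support statement is claimed. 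Second, in (i)$\Rightarrow$(iii) the existence of a fundamental solution $E\in\mathcal{D}'(\mathbb{R}^{n})$ with $u*E=\delta$ is itself one of the equivalent conditions of H\"ormander's Theorem 16.3.9 and does not come from ``the same division machinery'' run in $\mathcal{E}'$; it needs a separate semilocal-to-global construction. A lighter route is to apply your division estimate directly to $\widehat{u*v}/\hat{u}=\hat{v}$: rapid decay of $\widehat{u*v}$ on logarithmic neighbourhoods of $\mathbb{R}^{n}$ plus the lower bound (i) gives rapid decay of $\hat{v}$ on $\mathbb{R}^{n}$, hence $v\in\mathcal{C}_{0}^{\infty}$.

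The one step I would call a genuine gap is the claim that not-(i)$\Rightarrow$not-(ii) is ``routine''. To contradict (ii) you must produce $w\in\mathcal{E}'$ with $\hat{w}/\hat{u}$ entire, which forces $\hat{w}$ to vanish to the correct order on the entire zero divisor of $\hat{u}$; the only systematic source of such $w$ is $w=u*g$, but if $g\in\mathcal{E}'$ then $\hat{w}/\hat{u}=\hat{g}$ already satisfies a Paley--Wiener--Schwartz bound and nothing is contradicted. You would have to manufacture $g$ outside $\mathcal{E}'$ with $u*g$ still compactly supported and quantify the growth of $\hat{g}$, which is not a perturbation of the (genuinely routine) not-(i)$\Rightarrow$not-(iii) construction $v=\sum_{k}\epsilon_{k}e^{i\langle x,\xi_{k}\rangle}\psi(x)$. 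H\"ormander sidesteps this entirely: from (ii) he deduces that $u*\mathcal{E}'$ is closed and obtains the quantitative lower bound (i) from an open-mapping argument applied to the division map $w\mapsto v$. Either adopt that functional-analytic route for (ii)$\Rightarrow$(i), or reorganize the implications so that the only contrapositive construction you need is the one for (iii).
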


\begin{defn}[{\cite[Definition 16.3.12]{Hormander2}}]
\label{def:invertible} We say that an element $u$ of $\mathcal{E}'(\mathbb{R}^{n})$
is invertible if it satisfies the conditions in Theorem \ref{thm:invertible}.
\end{defn}

\begin{rem}
\label{rem:invertibility} The statements (a) and (b) below are equivalent.
If they are true, then so is (i) in Theorem \ref{thm:invertible}.
In other words, (a) and (b) are sufficient conditions for $u$ to
be invertible. 

(a) There is a constant $A>0$ such that we have
\[
\sup\left\{ |\hat{u}(\eta)|\,;\,\eta\in\mathbb{R}^{n},|\eta-\xi|<A\log(2+|\xi|)\right\} >(A+|\xi|)^{-A}
\]
for any $\xi\in\mathbb{R}^{n}$. 

(b) There are constants $A,B>0$ such that we have
\[
\sup\left\{ |\hat{u}(\eta)|\,;\,\eta\in\mathbb{R}^{n},|\eta-\xi|<A\log(2+|\xi|)\right\} >(A+|\xi|)^{-A}
\]
for any $\xi\in\mathbb{R}^{n}$ satisfying $|\xi|>B$. 
\end{rem}

We employ the normalized Bessel function $j_{\nu}(z)$ defined by
\[
j_{\nu}(z)=\Gamma(\nu+1)\left(\frac{2}{z}\right)^{\nu}J_{v}(z),\quad\nu>-1,
\]
where $J_{\nu}(z)$ is the usual Bessel function of the first kind
of order $\nu$. The advantage of $j_{\nu}(z)$ over $J_{\nu}(z)$
is the fact that the former is an even entire function. Indeed, we
have 
\[
j_{\nu}(z)=\sum_{k=0}^{\infty}\frac{(-1)^{k}\Gamma(\nu+1)}{k!\Gamma(k+\nu+1)}\left(\frac{z}{2}\right)^{2k},\,z\in\mathbb{C}.
\]
Recall that all the zeros of $J_{\nu}(z)$ are real and simple except
possibly at $z=0$. Therefore all the zeros of $j_{\nu}(z)$ are real
and simple and they appear in pairs of the form $\pm x_{j}\in\mathbb{R}\setminus\left\{ 0\right\} $. 
\begin{thm}
\label{thm:normalBesselderiv}We have

\begin{align}
j_{\nu}'(z) & =-\frac{zj_{\nu+1}(z)}{2(\nu+1)}.\label{eq:normalBesselderiv}
\end{align}
For $\nu>-1$ and $\ell\ge0$ fixed, there exist constants $C_{\ell,k}=C_{\ell,k}^{(\nu)}$
such that 
\begin{equation}
j_{\nu}^{(\ell)}(z)=\begin{cases}
{\displaystyle \sum_{k=\ell/2}^{\ell}C_{\ell,k}z^{2k-\ell}j_{\nu+k}(z)} & (\ell:\text{even}),\\
{\displaystyle \sum_{k=(\ell+1)/2}^{\ell}C_{\ell,k}z^{2k-\ell}j_{\nu+k}(z)} & (\ell:\text{odd}).
\end{cases}\label{eq:normalBesselderiv ell}
\end{equation}
We have 
\[
C_{\ell,k}=C_{\ell,k}^{(\nu)}=(-1)^{k}\frac{(2k-\ell+1)_{2(\ell-k)}}{2^{\ell}(\ell-k)!(\nu+1)_{k}},
\]
where $\ell/2\le k\le\ell$ if $\ell$ is even and $(\ell+1)/2\le k\le\ell$
if $\ell$ is odd. Here $(a)_{p}$ is the rising factorial: $(a)_{0}=1,\,(a)_{p}=a(a+1)\cdots(a+p-1)$.
\begin{proof}
The well-known formula $\left(z^{-\nu}J_{\nu}(z)\right)'=-z^{-\nu}J_{\nu+1}(z)$
(\cite[10.6.6]{dlmf}) gives 
\[
\left(z^{-\nu}\frac{z^{\nu}j_{\nu}(z)}{2^{\nu}\Gamma(\nu+1)}\right)'=-z^{-\nu}\frac{z^{\nu+1}j_{\nu+1}(z)}{2^{\nu+1}\Gamma(\nu+2)}
\]
and \eqref{eq:normalBesselderiv} follows. The general formula \eqref{eq:normalBesselderiv ell}
can be proved by induction. We have 
\[
\left\{ z^{2k-\ell}j_{\nu+k}(z)\right\} '=(2k-\ell)z^{2k-(\ell+1)}j_{\nu+k}(z)-\frac{z^{2(k+1)-(\ell+1)}}{2(\nu+k+1)}j_{\nu+(k+1)}(z).
\]
Here notice that $2k-\ell$ vanishes for $k=\ell/2$. If $\ell$ is
even, we have 
\begin{align*}
j_{\nu}^{(\ell+1)}(z)= & \sum_{k=\ell/2+1}^{\ell}C_{\ell,k}(2k-\ell)z^{2k-(\ell+1)}j_{\nu+k}(z)\\
 & \quad-\sum_{k=\ell/2}^{\ell}C_{\ell,k}\frac{z^{2(k+1)-(\ell+1)}}{2(\nu+k+1)}j_{\nu+(k+1)}(z)\\
= & \sum_{k'=(\ell+2)/2}^{\ell}C_{\ell,k'}(2k'-\ell)z^{2k'-(\ell+1)}j_{\nu+k'}(z)\\
 & \quad-\sum_{k'=(\ell+2)/2}^{\ell+1}\frac{C_{\ell,k'-1}}{2(\nu+k')}z^{2k'-(\ell+1)}j_{\nu+k'}(z)\\
= & \sum_{k'=[(\ell+1)+1]/2}^{\ell+1}C_{\ell+1,k'}z^{2k'-(\ell+1)}j_{\nu+k'}(z),
\end{align*}
where 
\[
\begin{cases}
\hspace{-1em} & C_{\ell+1,\ell+1}=-\dfrac{C_{\ell,\ell}}{2(\nu+\ell+1)},\\
\hspace{-1em} & C_{\ell+1,k}=(2k-\ell)C_{\ell,k}-\dfrac{C_{\ell,k-1}}{2(\nu+k)}\quad\left(\dfrac{(\ell+1)+1}{2}\le k\le\ell\right).
\end{cases}
\]
 If $\ell$ is odd, we have 
\begin{align*}
j_{\nu}^{(\ell+1)}(z)= & \sum_{k=(\ell+1)/2}^{\ell}C_{\ell,k}(2k-\ell)z^{2k-(\ell+1)}j_{\nu+k}(z)\\
 & \quad-\sum_{k=(\ell+1)/2}^{\ell}C_{\ell,k}\frac{z^{2(k+1)-(\ell+1)}}{2(\nu+k+1)}j_{\nu+(k+1)}(z)\\
= & \sum_{k'=(\ell+1)/2}^{\ell}C_{\ell,k'}(2k'-\ell)z^{2k'-(\ell+1)}j_{\nu+k'}(z)\\
 & \quad-\sum_{k'=(\ell+1)/2+1}^{\ell+1}\frac{C_{\ell,k'-1}}{2(\nu+k')}z^{2k'-(\ell+1)}j_{\nu+k'}(z)\\
= & \sum_{k'=(\ell+1)/2}^{\ell+1}C_{\ell+1,k'}z^{2k'-(\ell+1)}j_{\nu+k'}(z),
\end{align*}
where 
\[
\begin{cases}
\hspace{-1em} & C_{\ell+1,\ell+1}=-\dfrac{C_{\ell,\ell}}{2(\nu+\ell+1)},\\
\hspace{-1em} & C_{\ell+1,k}=(2k-\ell)C_{\ell,k}-\dfrac{C_{\ell,k-1}}{2(\nu+k)}\quad\left(\dfrac{\ell+1}{2}+1\le k\le\ell\right),\\
\hspace{-1em} & C_{\ell+1,(\ell+1)/2}=C_{\ell,(\ell+1)/2}.
\end{cases}
\]
Combining $C_{0,0}=1$ with the recurrence relation, we get the expression
of $C_{\ell,k}$.
\end{proof}
\end{thm}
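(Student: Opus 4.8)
The plan is to derive everything from the single first-order identity \eqref{eq:normalBesselderiv} and then iterate it. First I would prove \eqref{eq:normalBesselderiv} directly from the definition of the normalized Bessel function. Solving $j_{\nu}(z)=\Gamma(\nu+1)(2/z)^{\nu}J_{\nu}(z)$ for $J_{\nu}$ gives $z^{-\nu}J_{\nu}(z)=j_{\nu}(z)/(2^{\nu}\Gamma(\nu+1))$, so the classical identity $(z^{-\nu}J_{\nu}(z))'=-z^{-\nu}J_{\nu+1}(z)$ becomes a relation purely among normalized functions. The powers of $z$ and the $\Gamma$-factors collapse via $\Gamma(\nu+2)=(\nu+1)\Gamma(\nu+1)$, leaving exactly $j_{\nu}'(z)=-zj_{\nu+1}(z)/(2(\nu+1))$. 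This step is purely mechanical and presents no difficulty.

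For the general formula \eqref{eq:normalBesselderiv ell} I would argue by induction on $\ell$, the case $\ell=0$ being trivial with $C_{0,0}=1$. Assuming the formula for $\ell$, I differentiate each summand $z^{2k-\ell}j_{\nu+k}(z)$ by the product rule and apply \eqref{eq:normalBesselderiv} with $\nu$ replaced by $\nu+k$ to rewrite $j_{\nu+k}'(z)$. Each summand then splits into a piece proportional to $z^{2k-(\ell+1)}j_{\nu+k}(z)$ and a piece proportional to $z^{2(k+1)-(\ell+1)}j_{\nu+(k+1)}(z)$, so the exponent pattern $z^{2k'-(\ell+1)}$ is preserved. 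Collecting the two contributions to a fixed index $k'$ produces the bulk recurrence $C_{\ell+1,k'}=(2k'-\ell)C_{\ell,k'}-C_{\ell,k'-1}/(2(\nu+k'))$ together with a top relation $C_{\ell+1,\ell+1}=-C_{\ell,\ell}/(2(\nu+\ell+1))$ coming from the highest-index term alone.

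The one point demanding care is the lower end of the summation range, which behaves differently according to the parity of $\ell$. When $\ell$ is even, the factor $2k-\ell$ annihilates the $k=\ell/2$ term of the first family, so the new sum begins at $k'=(\ell+2)/2$, consistent with the stated odd-order range. When $\ell$ is odd no such cancellation occurs at the bottom, and the lowest term survives to give the extra boundary coefficient $C_{\ell+1,(\ell+1)/2}=C_{\ell,(\ell+1)/2}$. Tracking the index bookkeeping cleanly through this parity switch is the only subtle part of the induction; once the recurrence and its boundary cases are recorded correctly, the inductive step closes.

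Finally, to obtain the explicit coefficients I would verify that $C_{\ell,k}=(-1)^{k}(2k-\ell+1)_{2(\ell-k)}/\bigl(2^{\ell}(\ell-k)!\,(\nu+1)_{k}\bigr)$ satisfies this recurrence. I expect this verification, rather than the differentiation, to be the main obstacle, since it is a Pochhammer-symbol identity that must simultaneously reproduce the factor $2k-\ell$, the factor $-1/(2(\nu+k))$, and the degree shift from $\ell$ to $\ell+1$. Using $(\nu+1)_{k}/(\nu+1)_{k-1}=\nu+k$ together with the shift rules governing how $(2k-\ell+1)_{2(\ell-k)}$ transforms under $\ell\mapsto\ell+1$, both the bulk relation and the two boundary relations should reduce to elementary arithmetic; the base case $C_{0,0}=1$ then fixes the normalization, completing the proof.
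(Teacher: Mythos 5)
Your proposal is correct and follows essentially the same route as the paper's proof: derive the first-order identity from $(z^{-\nu}J_{\nu}(z))'=-z^{-\nu}J_{\nu+1}(z)$, induct on $\ell$ via the product rule, track the parity-dependent lower summation limit (including the vanishing of $2k-\ell$ at $k=\ell/2$ for even $\ell$ and the surviving boundary term $C_{\ell+1,(\ell+1)/2}=C_{\ell,(\ell+1)/2}$ for odd $\ell$), and pin down the explicit coefficients by checking the closed form against the recurrence with $C_{0,0}=1$. The recurrences you state match the paper's exactly.
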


\begin{lem}
\label{lem:besselbound}Fix $r>0$ and $n\ge1$. Let $\nu=n/2-1$.

(i) There exist constants $A,B>0$ such that 
\begin{align}
\sup\left\{ \bigl|j_{\nu}^{(\ell)}(r|\eta|)\bigr|\,;\,\eta\in\mathbb{R}^{n},|\eta-\xi|<A\log(2+|\xi|)\right\}  & >(A+|\xi|)^{-A}\label{eq:besselbound}
\end{align}
for any $\xi\in\mathbb{R}^{n}$ with $|\xi|>B$.

(ii) There exists a sequence of real numbers $\bigl\{ a_{m}^{(\nu,\ell)}\bigr\}_{m}$
such that $j_{\nu}^{(\ell)}\bigl(\pm a_{m}^{(\nu,\ell)}\bigr)=0$
and $a_{m}^{(\nu,\ell)}\to\infty$ $(m\to\infty)$.
\end{lem}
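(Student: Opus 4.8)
The plan is to deduce both assertions from a single fact: the large-$x$ asymptotics of $j_\nu^{(\ell)}$ on the positive real axis. Using the representation \eqref{eq:normalBesselderiv ell} together with the classical asymptotic expansion of the Bessel function (\cite[10.17.3]{dlmf}), namely $J_{\nu+k}(x)=\sqrt{2/(\pi x)}\bigl[\cos(x-(\nu+k)\pi/2-\pi/4)+O(1/x)\bigr]$ as $x\to+\infty$, I would first establish that
\[
j_\nu^{(\ell)}(x)=c\,x^{-\nu-1/2}\cos(x-\theta)+O\bigl(x^{-\nu-3/2}\bigr)\qquad(x\to+\infty)
\]
for suitable constants $c\ne0$ and $\theta\in\mathbb{R}$ depending only on $\nu$ and $\ell$.

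To obtain this, I substitute the asymptotics of each $J_{\nu+k}$ into the terms of \eqref{eq:normalBesselderiv ell}, writing $j_{\nu+k}(x)=\Gamma(\nu+k+1)2^{\nu+k}x^{-(\nu+k)}J_{\nu+k}(x)$. The $k$-th term then contributes at the order $x^{k-\ell-\nu-1/2}$, so the slowest-decaying contribution is the single term $k=\ell$, at order $x^{-\nu-1/2}$, while every other term is $O(x^{-\nu-3/2})$. Since $C_{\ell,\ell}=(-1)^\ell/\bigl(2^\ell(\nu+1)_\ell\bigr)\ne0$, the leading coefficient $c=C_{\ell,\ell}\Gamma(\nu+\ell+1)2^{\nu+\ell}\sqrt{2/\pi}$ is nonzero, with $\theta=(\nu+\ell)\pi/2+\pi/4$. (For $n=1$, i.e. $\nu=-1/2$, the envelope $x^{-\nu-1/2}$ is constant, reflecting $j_{-1/2}(x)=\cos x$; the arguments below are unaffected.)

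For (ii), I evaluate the asymptotic at the points $x_m^+=\theta+2m\pi$ and $x_m^-=\theta+(2m+1)\pi$, where $\cos(x-\theta)=\pm1$. For $m$ large the error term is dominated by the leading term, so $j_\nu^{(\ell)}(x_m^+)$ and $j_\nu^{(\ell)}(x_m^-)$ have opposite signs; the intermediate value theorem produces a zero in each interval $(x_m^+,x_m^-)$, yielding a sequence of positive zeros tending to $+\infty$. Because $j_\nu$ is even, $j_\nu^{(\ell)}$ has parity $(-1)^\ell$, so each such zero $a$ is accompanied by the zero $-a$; this gives the desired pairs $\pm a_m^{(\nu,\ell)}$.

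For (i), it suffices to bound the supremum from below along the ray through $\xi$: for $\eta=t\,\xi/|\xi|$ with $t\in(|\xi|-\rho,|\xi|+\rho)$, $\rho=A\log(2+|\xi|)$, one has $|\eta-\xi|<\rho$. As $t$ sweeps this interval, $rt$ sweeps an interval of length $2r\rho\to\infty$, which for $|\xi|$ large exceeds the spacing of the near-maxima of $|\cos(\cdot-\theta)|$; hence some admissible $t$ makes $|\cos(rt-\theta)|\ge1/2$. At that point the asymptotic gives $|j_\nu^{(\ell)}(rt)|\ge \mathrm{const}\cdot|\xi|^{-\nu-1/2}$ once $|\xi|$ is large (using $rt\le 2r|\xi|$ and $\nu+1/2\ge0$). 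Finally I would fix $A>\nu+1/2$; then $(A+|\xi|)^{-A}$ decays strictly faster than $|\xi|^{-\nu-1/2}$, so the lower bound, and hence \eqref{eq:besselbound}, holds for all $|\xi|>B$ with $B$ sufficiently large. The main obstacle is the first step, pinning down the leading asymptotic term with a provably nonzero coefficient, after which both conclusions follow from elementary oscillation arguments; note that letting $A$ appear in both the radius and the exponent causes no circularity, since enlarging $A$ simultaneously widens the ball and sharpens the decay of the right-hand side.
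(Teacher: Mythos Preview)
Your proposal is correct and follows essentially the same route as the paper: both derive the asymptotic $j_\nu^{(\ell)}(x)\sim c\,x^{-\nu-1/2}\cos(x-\theta)$ by singling out the $k=\ell$ term in \eqref{eq:normalBesselderiv ell}, then read off (i) from the polynomial-order envelope and (ii) from the sign changes of the cosine factor. Your write-up is simply more explicit than the paper's terse version, spelling out the ray argument for (i) and the intermediate value step for (ii).
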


\begin{proof}
The function $j_{\mu}(x)\,(\mu>-1)$ has the asymptotic behavior (\cite[10.17.3]{dlmf})
\[
j_{\mu}(x)=\frac{\Gamma(\mu+1)}{\sqrt{\pi}}\left(\frac{2}{x}\right)^{\mu+1/2}\left\{ \cos\left(x-\frac{\mu\pi}{2}-\frac{\pi}{4}\right)+O(x^{-1})\right\} \quad(\mathbb{R}\ni x\to\infty).
\]
In the right-hand side of \eqref{eq:normalBesselderiv ell}, the dominant
term is the one corresponding to $k=\ell$. We have 
\begin{equation}
j_{\nu}^{(\ell)}(x)\sim\frac{\Gamma(\nu+1)C_{\ell,\ell}2^{\ell}}{\sqrt{\pi}}\left(\frac{2}{x}\right)^{\nu+1/2}\,\left\{ \cos\left(x-\frac{\pi(\nu+\ell)}{2}-\frac{\pi}{4}\right)+O(x{}^{-1})\right\} \label{eq:besselderivasymp}
\end{equation}
and the limit supremum of $|j_{\nu}^{(\ell)}(x)|$ on $x>\mathrm{const.}$
decays in the order of $x^{-(\nu+1/2)}$. Therefore the estimate \eqref{eq:besselbound}
holds true if $A$ and $B$ are sufficiently large. Moreover, \eqref{eq:besselderivasymp}
implies that the real valued function $j_{\nu}^{(\ell)}(x)$ for $x>0$
is oscillatory, that is, it changes sign infinitely many times as
$x\rightarrow\infty$.
\end{proof}
\begin{thm}
\label{thm:deltafouriertransform}For any $\xi\in\mathbb{R}^{n}$,
we have 
\begin{align}
\widehat{\delta}_{S(0,r)}(\xi) & =j_{n/2-1}(r|\xi|).\label{eq:deltafourier1}
\end{align}
For $\ell\ge0$, the Fourier transform of $\partial\delta_{S(0,r)}^{\ell}/\partial r^{\ell}$
is 
\begin{align}
\left(\frac{\partial^{\ell}\delta_{S(0,r)}}{\partial r^{\ell}}\right)^{\hspace{-0.2em}\wedge}(\xi) & =|\xi|^{\ell}j_{n/2-1}^{(\ell)}(r|\xi|)\label{eq:deltafourier2}\\
 & =\begin{cases}
{\displaystyle \sum_{k=\ell/2}^{\ell}C_{\ell,k}r^{2k-\ell}|\xi|^{2k}j_{n/2-1+k}(r|\xi|)} & (\ell:\text{even}),\\
{\displaystyle \sum_{k=(\ell+1)/2}^{\ell}C_{\ell,k}r^{2k-\ell}|\xi|^{2k}j_{n/2-1+k}(r|\xi|)} & (\ell:\text{odd}).
\end{cases}\nonumber 
\end{align}
\end{thm}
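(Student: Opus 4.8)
The plan is to prove \eqref{eq:deltafourier1} by a direct computation of the Fourier transform of the spherical measure, and then to obtain \eqref{eq:deltafourier2} by differentiating in the parameter $r$ and invoking Theorem \ref{thm:normalBesselderiv}.

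For the base case, I would start from $\widehat{\delta}_{S(0,r)}(\xi)=\langle\delta_{S(0,r)},e^{-i\langle x,\xi\rangle}\rangle=c_{r}\int_{S(0,r)}e^{-i\langle y,\xi\rangle}\,dS_{0,r}(y)$. The substitution $y=r\omega$ with $\omega\in S(0,1)$ rewrites this as $c_{1}\int_{S(0,1)}e^{-ir\langle\omega,\xi\rangle}\,dS_{0,1}(\omega)$, a rotation-invariant quantity depending only on $r|\xi|$. To identify it with $j_{n/2-1}(r|\xi|)$, I take $\xi=|\xi|e_{n}$ by rotational symmetry and slice the unit sphere by the last coordinate $\omega_{n}=t$; the slice measure contributes the factor $(1-t^{2})^{(n-3)/2}$, so the integral is $\propto\int_{-1}^{1}e^{-ir|\xi|t}(1-t^{2})^{(n-3)/2}\,dt$. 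This is exactly Poisson's integral representation of $J_{n/2-1}$ (\cite{dlmf}), and tracking the normalizing constants via $c_{1}=\Gamma(n/2)/(2\pi^{n/2})$ collapses the whole expression to $\Gamma(\nu+1)(2/(r|\xi|))^{\nu}J_{\nu}(r|\xi|)=j_{\nu}(r|\xi|)$ with $\nu=n/2-1$. The case $n=1$ is verified directly: there $\widehat{\delta}_{S(0,r)}(\xi)=\frac{1}{2}(e^{ir\xi}+e^{-ir\xi})=\cos(r\xi)=j_{-1/2}(r|\xi|)$.

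To pass to \eqref{eq:deltafourier2}, I would differentiate $\ell$ times in $r$. By definition, $\partial^{\ell}\delta_{S(0,r)}/\partial r^{\ell}$ is the parameter derivative of the family $r\mapsto\delta_{S(0,r)}$, so its pairing with the fixed smooth function $e^{-i\langle x,\xi\rangle}$ equals $\partial_{r}^{\ell}\langle\delta_{S(0,r)},e^{-i\langle x,\xi\rangle}\rangle$. Hence $(\partial^{\ell}\delta_{S(0,r)}/\partial r^{\ell})^{\wedge}(\xi)=\partial_{r}^{\ell}j_{\nu}(r|\xi|)$, and the chain rule yields $\partial_{r}^{\ell}j_{\nu}(r|\xi|)=|\xi|^{\ell}j_{\nu}^{(\ell)}(r|\xi|)$, which is the first line of \eqref{eq:deltafourier2}. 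Substituting $z=r|\xi|$ into \eqref{eq:normalBesselderiv ell} and using $|\xi|^{\ell}\cdot|\xi|^{2k-\ell}=|\xi|^{2k}$ then produces the two summation formulas with $\nu=n/2-1$.

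The main obstacle is the rigorous justification that the Fourier transform commutes with the $r$-differentiation, namely that evaluating the parameter derivative of $\delta_{S(0,r)}$ on the fixed exponential equals differentiating the scalar $j_{\nu}(r|\xi|)$. Because $e^{-i\langle x,\xi\rangle}$ is a fixed element of $\mathcal{C}^{\infty}(\mathbb{R}^{n})$ and $r\mapsto\delta_{S(0,r)}$ is smooth into $\mathcal{E}'(\mathbb{R}^{n})$, this interchange is legitimate---it is the same interchange already used to prove \eqref{eq:Nrconvol}---but it deserves an explicit remark rather than being taken for granted. Once this is settled, the identification of the base-case spherical integral with the Bessel function is the only real computation; everything else is bookkeeping of constants and a direct appeal to \eqref{eq:normalBesselderiv ell}.
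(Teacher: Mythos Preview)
Your proposal is correct and follows essentially the same route as the paper: identify $\widehat{\delta}_{S(0,r)}(\xi)$ with $j_{n/2-1}(r|\xi|)$ and then differentiate in $r$, invoking Theorem~\ref{thm:normalBesselderiv} for the displayed sums. The only difference is cosmetic: the paper establishes \eqref{eq:deltafourier1} by first citing \cite[Introduction, Lemma~3.6]{Helgason} for the case $r=1$ and then applying the scaling $\delta_{S(0,r)}(x)=r^{-n}\delta_{S(0,1)}(x/r)$, whereas you compute the spherical integral directly via Poisson's representation; and the paper compresses your careful justification of the interchange of Fourier transform with $\partial_r^{\ell}$ into the single phrase ``and \eqref{eq:deltafourier2} follows.''
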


\begin{proof}
By \cite[Introduction, Lemma 3.6]{Helgason}, 
\begin{align*}
\widehat{\delta}{}_{S(0,1)}(\xi) & =c_{1}(2\pi)^{n/2}\frac{J_{n/2-1}(|\xi|)}{|\xi|^{n/2-1}}=\frac{\Gamma(n/2)}{2\pi^{n/2}}(2\pi)^{n/2}\frac{J_{n/2-1}(|\xi|)}{|\xi|^{n/2-1}}\\
 & =2^{n/2-1}\Gamma(n/2)\frac{J_{n/2-1}(|\xi|)}{|\xi|^{n/2-1}}=j_{n/2-1}(|\xi|).
\end{align*}
For general $r>0$, 
\begin{align*}
\widehat{\delta}_{S(0,r)}(\xi) & =\int_{\mathbb{R}^{n}}e^{-i\langle x,\xi\rangle}\delta_{S(0,r)}(x)\,dx=\int_{\mathbb{R}^{n}}e^{-i\langle x,\xi\rangle}r^{-n}\delta_{S(0,1)}(x/r)\,dx\\
 & =\int_{\mathbb{R}^{n}}e^{-ir\langle y,\xi\rangle}\delta_{S(0,1)}(y)\,dy=\widehat{\delta}{}_{S(0,1)}(r\xi)=j_{n/2-1}(r|\xi|).
\end{align*}
The formula \eqref{eq:deltafourier1} has been proved and \eqref{eq:deltafourier2}
follows.
\end{proof}
\begin{cor}
\label{cor:deltafourier}The Fourier transform of $\partial^{\ell}\delta_{S(0,r)}/\partial r^{\ell}$
is an even entire function and we have 
\begin{align*}
\widehat{\delta}_{S(0,r)}(\zeta) & =j_{n/2-1}\bigl(r\sqrt{\zeta^{2}}\bigr)\\
 & =\Gamma\left(\frac{n}{2}\right)\sum_{k=0}^{\infty}\frac{(-1)^{k}}{k!\Gamma(k+n/2)}\left(\frac{r}{2}\right)^{2k}(\zeta^{2})^{k},\\
\left(\frac{\partial\delta_{S(0,r)}}{\partial r}\right)^{\hspace{-0.2em}\wedge}(\zeta) & =-\frac{r\zeta^{2}}{n}j_{n/2}\bigl(r\sqrt{\zeta^{2}}\bigr)\\
 & =-\frac{r\zeta^{2}}{2}\Gamma\left(\frac{n}{2}\right)\sum_{k=0}^{\infty}\frac{(-1)^{k}}{k!\Gamma(k+n/2+1)}\left(\frac{r}{2}\right)^{2k}(\zeta^{2})^{k},\\
\left(\frac{\partial^{\ell}\delta_{S(0,r)}}{\partial r^{\ell}}\right)^{\hspace{-0.2em}\wedge}(\zeta) & =\begin{cases}
{\displaystyle \sum_{k=\ell/2}^{\ell}C_{\ell,k}r^{2k-\ell}(\zeta^{2})^{k}j_{n/2-1+k}\bigl(r\sqrt{\zeta^{2}}\bigr)} & (\ell:\text{even}),\\
{\displaystyle \sum_{k=(\ell+1)/2}^{\ell}C_{\ell,k}r^{2k-\ell}(\zeta^{2})^{k}j_{n/2-1+k}\bigl(r\sqrt{\zeta^{2}}\bigr)} & (\ell:\text{odd}).
\end{cases}
\end{align*}
Here $\zeta=(\zeta_{1},\dots,\zeta_{n})\in\mathbb{C}^{n},\zeta^{2}=\zeta_{1}^{2}+\dots+\zeta_{n}^{2}.$
Notice that $j_{\nu}(\cdot)$ is an even function and $j_{\nu}\bigl(r\sqrt{\zeta^{2}}\bigr)$
is well-defined. 
\end{cor}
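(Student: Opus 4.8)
The plan is to pass from the equalities over $\mathbb{R}^n$ already established in Theorem \ref{thm:deltafouriertransform} to equalities of entire functions over $\mathbb{C}^n$ by analytic continuation. Since $\partial^\ell\delta_{S(0,r)}/\partial r^\ell$ is supported on the sphere $S(0,r)$, it belongs to $\mathcal{E}'(\mathbb{R}^n)$, so by the Paley--Wiener--Schwartz theorem its Fourier transform extends from $\mathbb{R}^n$ to an entire function on $\mathbb{C}^n$. This entire extension is the left-hand side of each asserted identity, and the task is to identify it with the displayed right-hand side.

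First I would check that every right-hand side is itself an entire function of $\zeta\in\mathbb{C}^n$. The crucial observation, already flagged at the end of the statement, is that each $j_\mu$ is even. Hence $j_\mu(z)=g_\mu(z^2)$ for an entire function $g_\mu$ of one variable, namely $g_\mu(w)=\sum_{k\ge0}\frac{(-1)^k\Gamma(\mu+1)}{k!\,\Gamma(k+\mu+1)}(w/4)^k$, so that $j_\mu(r\sqrt{\zeta^2})=g_\mu(r^2\zeta^2)$ is an honest entire function of $\zeta$ even though $\sqrt{\zeta^2}$ is not single-valued. Because $\zeta^2=\zeta_1^2+\dots+\zeta_n^2$ is a polynomial, each summand $C_{\ell,k}r^{2k-\ell}(\zeta^2)^k j_{n/2-1+k}(r\sqrt{\zeta^2})$ is entire, and the finite sums in the statement are entire as well.

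Next I would invoke the identity theorem for several complex variables. By Theorem \ref{thm:deltafouriertransform}, for real $\xi$ one has $|\xi|=\sqrt{\xi^2}$ and $|\xi|^2=\xi^2$, so each claimed identity already holds on the totally real subspace $\mathbb{R}^n\subset\mathbb{C}^n$. Writing the difference of the two entire sides as $H(\zeta)$, I freeze $\zeta_2,\dots,\zeta_n$ at real values and apply the one-variable identity theorem in $\zeta_1$ (an entire function of $\zeta_1$ vanishing on $\mathbb{R}$ vanishes identically), then repeat the argument successively in $\zeta_2,\dots,\zeta_n$; after $n$ steps $H\equiv0$ on $\mathbb{C}^n$. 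This upgrades \eqref{eq:deltafourier1} and \eqref{eq:deltafourier2} to the complex identities asserted.

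Finally the explicit power series drop out by substituting the Taylor expansion of $j_{n/2-1}$ into $j_{n/2-1}(r\sqrt{\zeta^2})=g_{n/2-1}(r^2\zeta^2)$, which gives the first display with $\Gamma(\nu+1)=\Gamma(n/2)$ and $\Gamma(k+\nu+1)=\Gamma(k+n/2)$. For $\ell=1$ I would use \eqref{eq:normalBesselderiv} to write $|\xi|\,j_{n/2-1}'(r|\xi|)=-\frac{r|\xi|^2}{n}j_{n/2}(r|\xi|)$, continue it to $-\frac{r\zeta^2}{n}j_{n/2}(r\sqrt{\zeta^2})$, and expand, absorbing the constant via $\Gamma(n/2+1)/n=\Gamma(n/2)/2$. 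The general-$\ell$ series is just the term-by-term version of \eqref{eq:normalBesselderiv ell} with $|\xi|$ replaced by $\sqrt{\zeta^2}$. The only genuinely delicate point in the whole argument is the one at the start of the second paragraph---making sure that $j_\mu(r\sqrt{\zeta^2})$ is a bona fide entire function---and once the evenness of $j_\mu$ is used this obstacle disappears; everything else is routine analytic continuation and bookkeeping of $\Gamma$-factors.
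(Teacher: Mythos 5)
Your argument is correct and is exactly the route the paper intends: the corollary is stated without a separate proof precisely because it follows from Theorem \ref{thm:deltafouriertransform} by the Paley--Wiener--Schwartz theorem, the evenness of $j_{\nu}$ (which makes $j_{\nu}\bigl(r\sqrt{\zeta^{2}}\bigr)$ a well-defined entire function of $\zeta$), and uniqueness of analytic continuation from $\mathbb{R}^{n}$ to $\mathbb{C}^{n}$. Your verification of the $\Gamma$-factor bookkeeping (e.g.\ $\Gamma(n/2+1)/n=\Gamma(n/2)/2$) is also correct.
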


\begin{prop}
\label{prop:invertibility}The distribution $\partial^{\ell}\delta_{S(0,r)}/\partial r^{\ell}\,(\ell\ge0)$
is invertible.
\end{prop}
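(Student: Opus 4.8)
The plan is to verify condition (b) of Remark \ref{rem:invertibility} for the distribution $u=\partial^{\ell}\delta_{S(0,r)}/\partial r^{\ell}$; by that remark together with Definition \ref{def:invertible}, this suffices to conclude invertibility. The starting point is the explicit Fourier transform furnished by Theorem \ref{thm:deltafouriertransform}, namely
\[
\hat{u}(\xi)=\left(\frac{\partial^{\ell}\delta_{S(0,r)}}{\partial r^{\ell}}\right)^{\hspace{-0.2em}\wedge}(\xi)=|\xi|^{\ell}j_{n/2-1}^{(\ell)}(r|\xi|),\qquad\xi\in\mathbb{R}^{n},
\]
as in \eqref{eq:deltafourier2}. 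Thus the size of $\hat{u}$ on $\mathbb{R}^{n}$ is governed by the radial profile $|\xi|^{\ell}\bigl|j_{\nu}^{(\ell)}(r|\xi|)\bigr|$ with $\nu=n/2-1$, and the task reduces to a lower bound for this profile on logarithmic windows.

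First I would recall from Lemma \ref{lem:besselbound}(i) that there exist constants $A,B>0$ with
\[
\sup\left\{\bigl|j_{\nu}^{(\ell)}(r|\eta|)\bigr|\,;\,\eta\in\mathbb{R}^{n},\,|\eta-\xi|<A\log(2+|\xi|)\right\}>(A+|\xi|)^{-A}
\]
for every $\xi\in\mathbb{R}^{n}$ with $|\xi|>B$. The only discrepancy between this and condition (b) is the extra polynomial factor $|\eta|^{\ell}$ present in $\hat{u}$ but absent from the Bessel estimate. The key observation is that this factor can only help. Since $|\xi|$ grows faster than $A\log(2+|\xi|)$, after enlarging $B$ if necessary we may assume that every $\eta$ in the window $|\eta-\xi|<A\log(2+|\xi|)$ satisfies $|\eta|\ge|\xi|-A\log(2+|\xi|)\ge1$, so that $|\eta|^{\ell}\ge1$ throughout the window. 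Consequently, for all $|\xi|>B$,
\[
\sup_{|\eta-\xi|<A\log(2+|\xi|)}|\eta|^{\ell}\bigl|j_{\nu}^{(\ell)}(r|\eta|)\bigr|\ \ge\ \sup_{|\eta-\xi|<A\log(2+|\xi|)}\bigl|j_{\nu}^{(\ell)}(r|\eta|)\bigr|\ >\ (A+|\xi|)^{-A},
\]
which is exactly condition (b) of Remark \ref{rem:invertibility} for $\hat{u}$.

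The argument is short precisely because the analytic heavy lifting---the asymptotic lower bound showing that the oscillating function $j_{\nu}^{(\ell)}$ does not decay too fast along arithmetic progressions of near-maxima---is already carried out in Lemma \ref{lem:besselbound}. The one point requiring care, and the only genuine obstacle, is the bookkeeping of constants: one must check that $B$ can be enlarged so as to force $|\eta|\ge1$ on the window while preserving the estimate of Lemma \ref{lem:besselbound}(i), which is immediate since the estimate persists on any smaller co-ball $\{|\xi|>B'\}$ with $B'\ge B$. No tension between the width of the window and the exponent arises, because enlarging the window only increases the left-hand supremum; moreover the degenerate case $\ell=0$, where $|\eta|^{\ell}\equiv1$, is included automatically. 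Invoking Remark \ref{rem:invertibility} then yields statement (i) of Theorem \ref{thm:invertible}, so $u=\partial^{\ell}\delta_{S(0,r)}/\partial r^{\ell}$ is invertible in the sense of Definition \ref{def:invertible}.
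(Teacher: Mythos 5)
Your proof is correct and follows the same route as the paper, which simply cites Remark \ref{rem:invertibility}~(b), Lemma \ref{lem:besselbound} and Theorem \ref{thm:deltafouriertransform}; you have merely spelled out the (easy) detail that the factor $|\eta|^{\ell}\ge1$ on the window for $|\xi|$ large, which the paper leaves implicit.
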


\begin{proof}
The proposition follows from Remark \ref{rem:invertibility} (b),
Lemma \ref{lem:besselbound} and Theorem \ref{thm:deltafouriertransform}.
\end{proof}

\section{Surjectivity on the space of smooth functions}

We introduce the notion of $\mu$-convexity for supports of a pair
of open sets $(X_{1},X_{2})$. When $\mu\in\mathcal{E}'(\mathbb{R}^{n})$,
we set 
\[
\check{\mu}(\phi)=\mu(\check{\phi}),\,\check{\phi}(x)=\phi(-x),
\]
 where $\phi\in\mathcal{\mathcal{C}}_{0}^{\infty}(\mathbb{R}^{n})$
is a test function. In some cases, $\check{\mu}$ is denoted by $(\mu)^{\vee}$. 
\begin{defn}[{\cite[Definition 16.5.4]{Hormander2}}]
 Assume $\mu\in\mathcal{E}'(\mathbb{R}^{n})$. Let $X_{1}$ and $X_{2}$
be non-empty open subsets of $\mathbb{R}^{n}$ satisfying $X_{2}-\mathrm{supp\,}\mu\subset X_{1}$.
We say that $(X_{1},X_{2})$ is $\mu$-convex for supports if for
every compact set $K_{1}\subset X_{1}$ one can find a compact set
$K_{2}\subset X_{2}$ such that $\mathrm{supp\,}v\subset K_{2}$ if
$v\in\mathcal{\mathcal{C}}_{0}^{\infty}(X_{2})$ and $\mathrm{supp\,}\check{\mu}*v\subset K_{1}$.
\end{defn}

We will need the case of $X_{1}=X_{2}=\mathbb{R}^{n}$ only. The condition
$X_{2}-\mathrm{supp\,}\mu\subset X_{1}$ is trivial in that case.
\begin{thm}[{\cite[Theorem 16.5.7]{Hormander2}}]
\label{thm:surjectivity} Assume $\mu\in\mathcal{E}'(\mathbb{R}^{n})$.
Let $X_{1}$ and $X_{2}$ be non-empty open subsets of $\mathbb{R}^{n}$
satisfying $X_{2}-\mathrm{supp\,}\mu\subset X_{1}$. Then the following
two statements are equivalent.

(i) The convolution operator $\mu*\colon\mathcal{\mathcal{C}}^{\infty}(X_{1})\to\mathcal{\mathcal{C}}^{\infty}(X_{2})$
is surjective.

(ii) The distribution $\mu$ is invertible and the pair $(X_{1},X_{2})$
is $\mu$-convex for supports.
\end{thm}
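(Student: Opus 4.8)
The plan is to prove the equivalence through the functional-analytic duality between $\mu*$ and its transpose, reducing surjectivity to a closed-range property that then decomposes into the analytic condition (invertibility) and the geometric condition ($\mu$-convexity for supports). First I would set up the duality: the spaces $\mathcal{C}^\infty(X_1)$ and $\mathcal{C}^\infty(X_2)$ are reflexive Fr\'echet spaces whose strong duals are $\mathcal{E}'(X_1)$ and $\mathcal{E}'(X_2)$, and a direct computation from the definition of convolution identifies the transpose of $A=\mu*\colon\mathcal{C}^\infty(X_1)\to\mathcal{C}^\infty(X_2)$ as
\[
A^t\colon\mathcal{E}'(X_2)\to\mathcal{E}'(X_1),\qquad A^t v=\check\mu*v.
\]
The hypothesis $X_2-\operatorname{supp}\mu\subset X_1$ is exactly what guarantees both that $Au\in\mathcal{C}^\infty(X_2)$ for $u\in\mathcal{C}^\infty(X_1)$ and that $\operatorname{supp}(\check\mu*v)\subset X_1$ for $v\in\mathcal{E}'(X_2)$. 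The key functional-analytic input is the closed-range theorem in the Fr\'echet setting: $A$ is surjective if and only if $A^t$ is injective and $\operatorname{Im}A^t$ is weak-$*$ closed in $\mathcal{E}'(X_1)$.

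Next I would dispose of injectivity and the necessity of the conditions in (ii). Injectivity of $A^t$ amounts to density of the range of $A$; taking Fourier--Laplace transforms, $\check\mu*v=0$ forces $\widehat{\check\mu}\,\hat v\equiv0$, and since $\hat v$ is entire this yields $v=0$ as soon as $\mu\neq0$, so injectivity is automatic once $\mu$ is invertible. For necessity, surjectivity makes $A$ open by the open mapping theorem, whence $\operatorname{Im}A^t$ is weak-$*$ closed; reading this off support-theoretically gives the $\mu$-convexity of $(X_1,X_2)$, while invertibility is extracted from solvability of $\mu*g=f$ by choosing data whose Fourier transforms realize the division problem and invoking the Paley--Wiener--Schwartz characterization together with Theorem \ref{thm:invertible}.

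The substance is the implication (ii)$\Rightarrow$(i), namely that invertibility and $\mu$-convexity together force $\operatorname{Im}A^t=\check\mu*\mathcal{E}'(X_2)$ to be weak-$*$ closed. Here I would argue via the Banach--Dieudonn\'e (Krein--Smulian) theorem: since $\mathcal{C}^\infty(X_1)$ is a Fr\'echet--Montel space it suffices to test weak-$*$ closedness against equicontinuous sets, i.e.\ to show that if $\check\mu*v_j\to g$ with the $\check\mu*v_j$ lying in a fixed polar set, then $g=\check\mu*v$ for some $v\in\mathcal{E}'(X_2)$. The $\mu$-convexity hypothesis supplies the crucial properness, confining the supports of the $v_j$ to a single compact subset of $X_2$ once the supports of $\check\mu*v_j$ are confined to a compact subset of $X_1$; invertibility then provides, through Theorem \ref{thm:invertible}(ii), the quantitative division estimates on the Fourier--Laplace side that allow one to pass to the limit and solve $\check\mu*v=g$ with $v$ of controlled support. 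Combining the support control with the division estimate yields the required closedness, and hence surjectivity.

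The main obstacle I anticipate is precisely this synthesis: marrying the geometric support-confinement from $\mu$-convexity to the analytic, estimate-driven solvability from invertibility, so as to produce a genuine preimage $v\in\mathcal{E}'(X_2)$ rather than merely a distribution solving the equation somewhere on $\mathbb{R}^n$. Keeping the Paley--Wiener--Schwartz bounds on $\hat v$ uniform throughout the limiting process, so that the exponential type stays compatible with a fixed compact support in $X_2$, is the delicate quantitative step, and it is exactly where the machinery behind Theorem \ref{thm:invertible} does the real work.
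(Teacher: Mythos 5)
This statement is quoted verbatim from H\"ormander \cite[Theorem 16.5.7]{Hormander2}; the paper gives no proof of its own, so the only thing to measure your proposal against is the standard argument in H\"ormander's book, whose broad architecture (duality between $\mu*$ on $\mathcal{C}^{\infty}$ and $\check{\mu}*$ on $\mathcal{E}'$, surjectivity reduced to injectivity plus weak-$*$ closedness of the range of the transpose, Banach--Dieudonn\'e to localize the closedness to equicontinuous sets) your outline does reproduce correctly. The functional-analytic frame is right, and the identification of the transpose and the role of the hypothesis $X_{2}-\mathrm{supp}\,\mu\subset X_{1}$ are accurate.

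However, as a proof the proposal has genuine gaps at every point where actual work is required, and you acknowledge the largest one yourself. (1) For the necessity of $\mu$-convexity you say one "reads off" the convexity from weak-$*$ closedness of $\mathrm{Im}\,A^{t}$; this is not a proof. The real argument assumes convexity fails, takes $v_{j}\in\mathcal{C}_{0}^{\infty}(X_{2})$ with $\mathrm{supp}\,\check{\mu}*v_{j}$ in a fixed compact $K_{1}$ but $\mathrm{supp}\,v_{j}$ escaping every compact subset of $X_{2}$, and must then manufacture an $f\in\mathcal{C}^{\infty}(X_{2})$ with no preimage, typically via a Baire category or Hahn--Banach construction using the $v_{j}$ as functionals; none of that is present. (2) For the necessity of invertibility, "choosing data whose Fourier transforms realize the division problem" is a placeholder: one has to convert surjectivity into a priori estimates by the open mapping theorem and then into the lower bound (i) of Theorem \ref{thm:invertible}, which is a separate quantitative argument. (3) For the main implication (ii)$\Rightarrow$(i), the step you flag as "the delicate quantitative step" --- passing to the limit in $\check{\mu}*v_{j}\to g$ with uniform control on the $v_{j}$ so that the limit is again of the form $\check{\mu}*v$ with $v$ supported in a fixed compact subset of $X_{2}$ --- is precisely the content of the theorem, and deferring it to "the machinery behind Theorem \ref{thm:invertible}" means the proof is not carried out. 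In short, you have correctly sketched the skeleton of H\"ormander's proof, but each of the three substantive implications is asserted rather than established, so the proposal is a roadmap, not a proof.
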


\begin{prop}
\label{prop:delta convex}The pair $(\mathbb{R}^{n},\mathbb{R}^{n})$
is $\partial^{\ell}\delta_{S(0,r)}/\partial r^{\ell}$-convex for
supports for $\ell\ge0$.
\end{prop}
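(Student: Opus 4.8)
The plan is to reduce the statement to a known support condition on the convolution kernel $\mu=\partial^{\ell}\delta_{S(0,r)}/\partial r^{\ell}$. Since we only need the case $X_{1}=X_{2}=\mathbb{R}^{n}$, the containment hypothesis $X_{2}-\operatorname{supp}\mu\subset X_{1}$ is automatic, and the task is purely to verify $\mu$-convexity for supports: given a compact $K_{1}\subset\mathbb{R}^{n}$, produce a compact $K_{2}$ so that any $v\in\mathcal{C}_{0}^{\infty}(\mathbb{R}^{n})$ with $\operatorname{supp}(\check{\mu}*v)\subset K_{1}$ must satisfy $\operatorname{supp}v\subset K_{2}$. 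The decisive geometric fact is that $\mu$ has \emph{compact support}: indeed $\delta_{S(0,r)}$ is supported on the sphere $S(0,r)$ of radius $r$, and differentiating in the parameter $r$ does not enlarge the support beyond $S(0,r)$, so $\operatorname{supp}\mu\subset S(0,r)$ and $\operatorname{supp}\check{\mu}\subset S(0,r)$ as well.

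First I would invoke the standard support-theoretic tool for convolution with a compactly supported distribution, namely the relation between supports and convex hulls. The key quantitative input is the Titchmarsh–Lions type identity on convex hulls of supports, which for $\mu$ with $0\in\operatorname{conv}(\operatorname{supp}\mu)$ reads
\[
\operatorname{ch}(\operatorname{supp}(\check{\mu}*v))=\operatorname{ch}(\operatorname{supp}\check{\mu})+\operatorname{ch}(\operatorname{supp}v),
\]
where $\operatorname{ch}$ denotes convex hull. Since $\operatorname{supp}\check{\mu}$ lies in the bounded set $S(0,r)$, its convex hull is contained in the closed ball $\overline{B}(0,r)$. Thus once $\operatorname{supp}(\check{\mu}*v)\subset K_{1}$, the convex hull of $\operatorname{supp}v$ is contained in $\operatorname{ch}(K_{1})-\operatorname{ch}(\operatorname{supp}\check{\mu})$, hence in the bounded set $\operatorname{ch}(K_{1})+\overline{B}(0,r)$. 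Taking $K_{2}$ to be this bounded closed set (or any compact set containing it) yields the required containment $\operatorname{supp}v\subset K_{2}$.

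The step requiring care is the justification that the convex-hull additivity holds for a general compactly supported distribution $\mu$, not merely a measure: this is exactly the content of the supporting-hyperplane version of the Titchmarsh convolution theorem as developed in H\"ormander (the same circle of results used to prove Theorem \ref{thm:surjectivity}), and I would cite it rather than reprove it. A cleaner alternative, which avoids any subtlety about whether $0\in\operatorname{conv}(\operatorname{supp}\mu)$, is to argue directly: for each unit vector $\omega$ the supporting function satisfies $H_{\operatorname{supp}(\check{\mu}*v)}(\omega)=H_{\operatorname{supp}\check{\mu}}(\omega)+H_{\operatorname{supp}v}(\omega)$, and since $H_{\operatorname{supp}\check{\mu}}(\omega)\le r$ for all $\omega$, boundedness of $\operatorname{supp}(\check{\mu}*v)$ forces boundedness of $\operatorname{supp}v$ with an explicit radius bound. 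I expect the only real obstacle to be stating this support identity at the correct level of generality for distributions; once that is in hand, the compactness of $\operatorname{supp}\mu$ makes the construction of $K_{2}$ immediate and uniform in $v$.
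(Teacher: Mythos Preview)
Your proposal is correct and follows essentially the same route as the paper: both invoke the convex-hull additivity for supports of convolutions (H\"ormander, \emph{Analysis of Linear PDO I}, Theorem~4.3.3) to conclude that $\operatorname{ch}\operatorname{supp}v\subset\operatorname{ch}K_{1}-\operatorname{ch}\operatorname{supp}\check{\mu}$, and then take $K_{2}$ to be this compact set. The only remark is that your qualifier ``for $\mu$ with $0\in\operatorname{conv}(\operatorname{supp}\mu)$'' is unnecessary---the identity $\operatorname{ch}\operatorname{supp}(u_{1}*u_{2})=\operatorname{ch}\operatorname{supp}u_{1}+\operatorname{ch}\operatorname{supp}u_{2}$ holds for arbitrary $u_{1},u_{2}\in\mathcal{E}'(\mathbb{R}^{n})$ without any such hypothesis, so there is no subtlety to work around and the supporting-function alternative is not needed.
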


\begin{proof}
Recall that 
\[
\mathrm{ch}\mathrm{\,\mathrm{supp\,}}u_{1}*u_{2}=\mathrm{ch}\mathrm{\,\mathrm{supp\,}}u_{1}+\mathrm{ch}\mathrm{\,\mathrm{supp\,}}u_{2}
\]
holds for any $u_{1},u_{2}\in\mathcal{E}'(\mathbb{R}^{n})$ (\cite[Theorem 4.3.3]{Hormander1}),
where $\mathrm{ch}$ denotes the convex hull. It implies $\mathrm{ch}\mathrm{\,\mathrm{supp\,}}u_{1}\subset\mathrm{ch}\mathrm{\,\mathrm{supp\,}}u_{1}*u_{2}-\mathrm{ch}\mathrm{\,\mathrm{supp\,}}u_{2}$. 

We have 
\[
\mathrm{supp\,}(\partial^{\ell}\delta_{S(0,r)}/\partial r^{\ell})^{\vee}=\mathrm{supp\,}\partial^{\ell}\delta_{S(0,r)}/\partial r^{\ell}=\left\{ x\in\mathbb{R}^{n}\,;\,|x|=r\right\} .
\]
Let $K_{1}\subset\mathbb{R}^{n}$ be an arbitrary compact set and
assume that $v\in\mathcal{\mathcal{C}}_{0}^{\infty}(\mathbb{R}^{n})$
satisfies $\mathrm{supp\,}(\partial^{\ell}\delta_{S(0,r)}/\partial r^{\ell})^{\vee}*v\subset K_{1}$.
We have
\begin{align*}
\mathrm{supp}\,v & \subset\mathrm{ch}\mathrm{\,supp\,}(\partial^{\ell}\delta_{S(0,r)}/\partial r^{\ell})^{\vee}*v-\mathrm{ch}\mathrm{\,supp\,}(\partial^{\ell}\delta_{S(0,r)}/\partial r^{\ell})^{\vee}\\
 & \subset\mathrm{ch}\mathrm{\,}K_{1}-\left\{ x\in\mathbb{R}^{n}\,;\,|x|\le r\right\} .
\end{align*}
Recall that the convex hull of a compact subset in $\mathbb{R}^{n}$
is compact. Therefore $K_{2}=\mathrm{ch}\mathrm{\,}K_{1}-\left\{ x\in\mathbb{R}^{n}\,;\,|x|\le r\right\} $
is a compact set independent of $v$.  
\end{proof}
\begin{thm}
Let $r>0$. The convolution operator $\partial^{\ell}\delta_{S(0,r)}/\partial r^{\ell}*\colon\mathcal{\mathcal{C}}^{\infty}(\mathbb{R}^{n})\to\mathcal{\mathcal{C}}^{\infty}(\mathbb{R}^{n})$
is surjective.
\end{thm}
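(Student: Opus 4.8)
The statement is a clean application of the machinery assembled above, so the plan is simply to feed the right objects into Theorem~\ref{thm:surjectivity}. I would set $\mu=\partial^{\ell}\delta_{S(0,r)}/\partial r^{\ell}$ and take $X_{1}=X_{2}=\mathbb{R}^{n}$. First I would note that $\mu\in\mathcal{E}'(\mathbb{R}^{n})$, since it is a compactly supported distribution carried by the sphere $\{x\in\mathbb{R}^{n}\,;\,|x|=r\}$, so that the hypotheses of the theorem are meaningful. With $X_{1}=X_{2}=\mathbb{R}^{n}$ the inclusion condition $X_{2}-\mathrm{supp}\,\mu\subset X_{1}$ holds trivially, as already observed after the definition of $\mu$-convexity for supports.

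Theorem~\ref{thm:surjectivity} then reduces surjectivity of $\mu*\colon\mathcal{C}^{\infty}(\mathbb{R}^{n})\to\mathcal{C}^{\infty}(\mathbb{R}^{n})$ to exactly two conditions: that $\mu$ be invertible, and that $(\mathbb{R}^{n},\mathbb{R}^{n})$ be $\mu$-convex for supports. The first is precisely Proposition~\ref{prop:invertibility}, and the second is precisely Proposition~\ref{prop:delta convex}. Invoking both and applying the implication (ii)$\Rightarrow$(i) in Theorem~\ref{thm:surjectivity} yields the desired surjectivity, completing the argument.

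There is no genuine obstacle left at this stage; the substance of the matter lives entirely in the two propositions being cited, both of which are already established. Invertibility rests on the lower bound for $\bigl|j_{\nu}^{(\ell)}\bigr|$ in Lemma~\ref{lem:besselbound}, which comes from the oscillatory asymptotics of the Bessel function, combined with the Fourier transform computation of $\partial^{\ell}\delta_{S(0,r)}/\partial r^{\ell}$ in Theorem~\ref{thm:deltafouriertransform}; the convexity is the soft support computation based on $\mathrm{ch}\,\mathrm{supp}\,(u_{1}*u_{2})=\mathrm{ch}\,\mathrm{supp}\,u_{1}+\mathrm{ch}\,\mathrm{supp}\,u_{2}$. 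The only point deserving care is to confirm that Hörmander's theorem is being used in the correct direction, namely the sufficiency (ii)$\Rightarrow$(i), so that having verified invertibility and $\mu$-convexity we may conclude, rather than merely deduce, surjectivity.
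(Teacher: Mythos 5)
Your proposal is correct and follows exactly the paper's own argument: the paper's proof is the one-line "Apply Theorem~\ref{thm:surjectivity} and Propositions \ref{prop:invertibility} and \ref{prop:delta convex}," and you have simply spelled out the same application (with $X_{1}=X_{2}=\mathbb{R}^{n}$ and the direction (ii)$\Rightarrow$(i)) in more detail.
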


\begin{proof}
Apply Theorem \ref{thm:surjectivity} and Propositions \ref{prop:invertibility}
and \ref{prop:delta convex}.
\end{proof}
We conclude this section by showing that $\partial^{\ell}\delta_{S(0,r)}/\partial r^{\ell}*\colon\mathcal{\mathcal{C}}^{\infty}(\mathbb{R}^{n})\to\mathcal{\mathcal{C}}^{\infty}(\mathbb{R}^{n})$
is not injective for any $\ell\ge0$. 
\begin{thm}
The kernel of the operator  $\partial^{\ell}\delta_{S(0,r)}/\partial r^{\ell}*\colon\mathcal{\mathcal{C}}^{\infty}(\mathbb{R}^{n})\to\mathcal{\mathcal{C}}^{\infty}(\mathbb{R}^{n})$
is infinite-dimensional for any $\ell\ge0$.
\end{thm}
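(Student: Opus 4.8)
The plan is to exhibit an infinite linearly independent family of exponential functions lying in the kernel. Write $\mu=\partial^{\ell}\delta_{S(0,r)}/\partial r^{\ell}$. The starting point is the elementary observation that exponentials are eigenfunctions of any convolution operator with an $\mathcal{E}'(\mathbb{R}^{n})$-kernel: for $\xi\in\mathbb{R}^{n}$ and $e_{\xi}(x)=e^{i\langle x,\xi\rangle}$ one has
\[
(\mu*e_{\xi})(x)=\langle\mu(y),e^{i\langle x-y,\xi\rangle}\rangle=e^{i\langle x,\xi\rangle}\langle\mu(y),e^{-i\langle y,\xi\rangle}\rangle=\hat{\mu}(\xi)\,e_{\xi}(x).
\]
Since $e_{\xi}\in\mathcal{C}^{\infty}(\mathbb{R}^{n})$, this shows that $e_{\xi}$ belongs to the kernel whenever $\hat{\mu}(\xi)=0$. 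Thus I would reduce the problem to producing infinitely many frequencies $\xi$ at which $\hat{\mu}$ vanishes, together with a linear independence argument.

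Next I would locate the real zeros of $\hat{\mu}$. By \eqref{eq:deltafourier2} we have $\hat{\mu}(\xi)=|\xi|^{\ell}j_{n/2-1}^{(\ell)}(r|\xi|)$ for $\xi\in\mathbb{R}^{n}$, so for $\xi\ne0$ the condition $\hat{\mu}(\xi)=0$ is equivalent to $j_{n/2-1}^{(\ell)}(r|\xi|)=0$. Lemma \ref{lem:besselbound}(ii) furnishes a sequence of positive reals $a_{m}=a_{m}^{(n/2-1,\ell)}\to\infty$ with $j_{n/2-1}^{(\ell)}(a_{m})=0$. Setting $\rho_{m}=a_{m}/r$, every $\xi$ with $|\xi|=\rho_{m}$ satisfies $\hat{\mu}(\xi)=0$; in particular I may select frequencies $\xi_{m}\in\mathbb{R}^{n}$ with pairwise distinct lengths $|\xi_{m}|=\rho_{m}$, so that $e_{\xi_{m}}\in\ker(\mu*)$ for every $m$.

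Finally I would invoke the classical linear independence of exponentials with distinct frequencies: if $\sum_{m=1}^{N}c_{m}e_{\xi_{m}}\equiv0$ on $\mathbb{R}^{n}$ with the $\xi_{m}$ distinct, then passing to Fourier transforms in the sense of tempered distributions gives $\sum_{m}c_{m}\delta_{\xi_{m}}=0$, and disjointness of the supports forces every $c_{m}=0$. Hence $\bigl\{e_{\xi_{m}}\bigr\}_{m}$ is an infinite linearly independent subset of $\ker(\mu*)$, and the kernel is infinite-dimensional. The eigenfunction computation for $\mu*e_{\xi}$ and the linear independence step are both routine; the only genuine input is the existence of infinitely many real zeros of $j_{n/2-1}^{(\ell)}$, which is exactly the content of Lemma \ref{lem:besselbound}(ii) and ultimately follows from the oscillatory asymptotics \eqref{eq:besselderivasymp}. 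I therefore do not anticipate a serious obstacle, since the substantive analytic work already resides in that asymptotic expansion.
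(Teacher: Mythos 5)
Your proposal is correct and follows essentially the same route as the paper: both identify exponentials as eigenfunctions of the convolution operator with eigenvalue $\hat{\mu}(\zeta)$ and then invoke Lemma \ref{lem:besselbound}(ii) to produce infinitely many zeros of $\hat{\mu}$. Your version adds an explicit linear-independence argument for exponentials with distinct frequencies, a step the paper leaves implicit.
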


\begin{proof}
Set 
\[
d_{\ell}(\zeta_{1})=\left(\frac{\partial^{\ell}\delta_{S(0,r)}}{\partial r^{\ell}}\right)^{\hspace{-0.2em}\wedge}(\zeta_{1},0,0\dots,0).
\]
Then by Lemma \ref{lem:besselbound} (ii), $d_{\ell}(\zeta_{1})$
admits infinitely many (real) zeros and so does $(\partial^{\ell}\delta_{S(0,r)}/\partial r^{\ell})^{\wedge}(\zeta)$.
Since $(\partial^{\ell}\delta_{S(0,r)}/\partial r^{\ell})^{\wedge}(\zeta)$
is a function of $\zeta^{2}$, there exists a countably infinite subset
$S$ of $\mathbb{C}$ such that $(\partial^{\ell}\delta_{S(0,r)}/\partial r^{\ell})^{\wedge}(\zeta)$
vanishes on $T=\left\{ \zeta;\,\zeta^{2}\in S\right\} .$ 

On the other hand, we have 
\begin{align*}
\exp(\pm i\langle\zeta,x\rangle)*\frac{\partial^{\ell}\delta_{S(0,r)}}{\partial r^{\ell}}(x) & =\int_{\mathbb{R}^{n}}\frac{\partial^{\ell}\delta_{S(0,r)}}{\partial r^{\ell}}(y)\exp(\pm i\langle\zeta,x-y\rangle)\,dy\\
 & =\left(\frac{\partial^{\ell}\delta_{S(0,r)}}{\partial r^{\ell}}\right)^{\hspace{-0.2em}\wedge}(\zeta)\exp(\pm i\langle\zeta,x\rangle)
\end{align*}
and $\exp(\pm i\langle\zeta,x\rangle)$ belong to the kernel if $\zeta\in T$.
It implies the infinite-dimensionality of the kernel. By integrating
over $T$ with respect to measures, one can construct a large variety
of homogeneous solutions to $\partial^{\ell}\delta_{S(0,r)}/\partial r^{\ell}*u=0$. 

Notice that in the particular cases $\ell=0,1$, we can get a concrete
description of the location of those zeros. Indeed, they correspond
to the zeros of the Bessel function. If $\pm r\sqrt{\zeta^{2}}$ are
zeros of $j_{n/2-1}(\cdot)$, then $\exp(\pm i\langle\zeta,x\rangle)$
belong to the kernel of $\delta_{S(0,r)}*$. If $\pm r\sqrt{\zeta^{2}}$
are zeros of $j_{n/2}(\cdot)$, then $\exp(\pm i\langle\zeta,x\rangle)$
belong to the kernel of $\partial\delta_{S(0,r)}/\partial r*$.
\end{proof}
\begin{rem}
There are elements of the kernel that have very simple expressions.
If $a_{m}\in\mathbb{R\,}(m\ge0)$ satisfy $j_{n/2-1}^{(\ell)}(\pm ra_{m})=0$,
we have $d_{\ell}(\pm a_{m})=0$ and 
\begin{align*}
 & (\partial^{\ell}\delta_{S(0,r)}/\partial r^{\ell})^{\wedge}(\pm a_{m},0,0\dots,0)=(\partial^{\ell}\delta_{S(0,r)}/\partial r^{\ell})^{\wedge}(0,\pm a_{m},0,\dots,0)\\
 & =\dots=(\partial^{\ell}\delta_{S(0,r)}/\partial r^{\ell})^{\wedge}(0,\dots,0,\pm a_{m})=0.
\end{align*}
 Then $\exp(\pm ia_{m}x_{j})$ belong to the kernel for any $j=1,\dots,n$.
\end{rem}

\section{Surjectivity on the space of distributions}
\begin{defn}[{\cite[Definition 16.5.13]{Hormander2}}]
 Assume $\mu\in\mathcal{E}'(\mathbb{R}^{n})$. Let $X_{1},X_{2}$
be non-empty open subsets of $\mathbb{R}^{n}$ satisfying $X_{2}-\mathrm{sing\,supp\,}\mu\subset X_{1}$.
We say that $(X_{1},X_{2})$ is $\mu$-convex for singular supports
if for every compact set $K_{1}\subset X_{1}$ one can find a compact
set $K_{2}\subset X_{2}$ such that $\mathrm{sing\,supp\,}v\subset K_{2}$
if $v\in\mathcal{\mathcal{E}}'(X_{2})$ and $\mathrm{sing\,supp\,}\check{\mu}*v\subset K_{1}$.
\end{defn}

We recall two important facts.
\begin{thm}[{\cite[Corollary 16.5.19]{Hormander2}}]
\label{thm:HormanderCor16.5.19}Assume $\mu\in\mathcal{E}'(\mathbb{R}^{n})$.
Let $X_{1}$ and $X_{2}$ be non-empty open subsets of $\mathbb{R}^{n}$.
Assume $X_{2}-\mathrm{supp\,}\mu\subset X_{1}$. Then $\mu*\mathcal{D}'(X_{1})=\mathcal{D}'(X_{2})$
if and only if $\mu$ is invertible and $(X_{1},X_{2})$ is $\mu$-convex
for supports and singular supports.
\end{thm}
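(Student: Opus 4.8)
The plan is to obtain both implications from the duality between $\mu*$ and its transpose. Because $X_2-\mathrm{supp}\,\mu\subset X_1$, the assignment $S\colon\phi\mapsto\check\mu*\phi$ maps $\mathcal{C}_0^\infty(X_2)$ continuously into $\mathcal{C}_0^\infty(X_1)$, and for $u\in\mathcal{D}'(X_1)$, $\phi\in\mathcal{C}_0^\infty(X_2)$ one has $\langle\mu*u,\phi\rangle=\langle u,\check\mu*\phi\rangle=\langle u,S\phi\rangle$, so that $\mu*\colon\mathcal{D}'(X_1)\to\mathcal{D}'(X_2)$ is precisely the transpose of $S$. Since each $\mathcal{C}_0^\infty(X_i)$ is reflexive with strong dual $\mathcal{D}'(X_i)$, the closed-range criterion for these spaces reduces the surjectivity of $\mu*$ to two properties of $S$: injectivity and closedness of the range.

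Injectivity is the easy ingredient, and I would dispatch it using invertibility. If $S\phi=\check\mu*\phi=0$, then $\widehat{\check\mu}\,\hat\phi\equiv0$; invertibility of $\mu$, hence of $\check\mu$ since $\widehat{\check\mu}(\zeta)=\hat\mu(-\zeta)$, forces $\widehat{\check\mu}\not\equiv0$, so the entire function $\hat\phi$ vanishes off a proper analytic set and therefore everywhere, giving $\phi=0$. In the converse direction the necessity of invertibility I would read off from Theorem \ref{thm:invertible}, the solvability guaranteed by surjectivity being incompatible with the failure of condition (i).

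The heart of the argument, and the step I expect to be the main obstacle, is the identification of closedness of the range of $S$ with the two convexity conditions. For necessity I would argue by contraposition: a failure of $\mu$-convexity for supports (respectively for singular supports) produces a sequence $\phi_j\in\mathcal{C}_0^\infty(X_2)$ whose images $S\phi_j$ keep their supports (respectively singular supports) in a fixed compact subset of $X_1$ while the $\phi_j$ escape toward $\partial X_2$, contradicting closedness. For sufficiency I would run the argument in reverse. If $S\phi_j\to\psi$ in $\mathcal{C}_0^\infty(X_1)$, then all $\mathrm{supp}\,S\phi_j$ lie in one compact $K_1\subset X_1$, so $\mu$-convexity for supports confines every $\mathrm{supp}\,\phi_j$ to a single compact $K_2\subset X_2$; this is the part that parallels the $\mathcal{C}^\infty$ analysis behind Theorem \ref{thm:surjectivity}. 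The new ingredient for distributions is uniform control of $\mathrm{sing\,supp}\,\phi_j$, which I would extract from $\mu$-convexity for singular supports together with the regularity form of invertibility, namely condition (iii) of Theorem \ref{thm:invertible} that $\check\mu*v\in\mathcal{C}^\infty$ forces $v\in\mathcal{C}^\infty$, exploited via a fundamental solution furnished by invertibility. Once the $\phi_j$ are trapped in the Fréchet space of functions supported in $K_2$ and their singular supports are uniformly controlled, I can pass to a limit $\phi$ with $S\phi=\psi$, so the range is closed.

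Assembling the pieces, invertibility together with the two convexity conditions yields injectivity and closed range of $S$, hence surjectivity of the transpose $\mu*$; conversely surjectivity forces all three through the obstructions above. The genuinely hard point is the singular-support estimate in the sufficiency direction, where the regularity supplied by invertibility must be made quantitative and combined with the geometric convexity hypothesis to produce a bound strong enough to survive the limit.
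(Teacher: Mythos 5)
The paper does not prove this statement at all: it is quoted verbatim from H\"ormander \cite[Corollary 16.5.19]{Hormander2} and used as a black box, so there is no in-paper argument to compare yours with. Judged on its own terms, your duality framework --- realizing $\mu*$ on $\mathcal{D}'$ as the transpose of $S\colon\phi\mapsto\check\mu*\phi$ and reducing surjectivity to injectivity plus weak closedness of the range of $S$ --- is a legitimate starting point, and the injectivity step is fine (indeed $\mu\ne0$ already suffices there, no invertibility needed).

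The genuine gap is in the sufficiency half, precisely where you say the main work lies. First, your proposed use of $\mu$-convexity for singular supports is aimed at the wrong object: you invoke it to control $\mathrm{sing\,supp}\,\phi_j$ for $\phi_j\in\mathcal{C}_0^\infty(X_2)$, but these are smooth functions, so their singular supports are empty and the condition says nothing about them. That hypothesis is a condition on $\mathcal{E}'(X_2)$, and in H\"ormander's actual argument it enters on the solution side --- when local solutions built from a fundamental solution of $\mu$ (which exists by invertibility, with singular support controlled via the analogue of Theorem \ref{thm:Corollary 16.3.5}) are patched together --- not on the test-function side. Second, even granting support confinement, ``pass to a limit $\phi$ with $S\phi=\psi$'' does not follow: a sequence $\phi_j$ with supports in a fixed compact $K_2$ need not converge, or even be bounded, in $\mathcal{C}_0^\infty(K_2)$ merely because $\check\mu*\phi_j$ converges; closed range requires an a priori seminorm estimate of the form $\|\phi\|\le C\,\|\check\mu*\phi\|'$, and producing it is where the Fourier-analytic content of invertibility must be used quantitatively. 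Finally, the necessity of invertibility is only asserted (``incompatible with the failure of condition (i)''); exhibiting an $f\in\mathcal{D}'(X_2)$ outside the range when (i) fails is itself a nontrivial construction. As written, the plan has the right skeleton, but the two load-bearing steps --- the seminorm estimate and the actual role of singular-support convexity --- are respectively missing and misapplied.
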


\,
\begin{thm}[{\cite[Corollary 16.3.15]{Hormander2}}]
\label{thm:Corollary 16.3.5} Assume that $u\in\mathcal{E}'(\mathbb{R}^{n})$
is invertible. Then we have 
\[
\mathrm{ch}\,\mathrm{sing\,supp\,}v\subset\mathrm{ch}\,\mathrm{sing\,supp\,}(u*v)-\mathrm{ch}\,\mathrm{sing\,supp\,}u,\quad v\in\mathcal{E}'(\mathbb{R}^{n}).
\]
\end{thm}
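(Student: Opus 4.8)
The plan is to reduce the asserted set inclusion to a one–parameter family of directional estimates on supporting functions, and then to obtain each such estimate by transferring the Fourier identity $\widehat{u*v}=\hat{u}\,\hat{v}$ through the quantitative Paley--Wiener description of the convex hull of a singular support. First I would record that for a convex compact set $K$ the inclusion $\mathrm{ch}\,\mathrm{sing\,supp}\,v\subset K$ is equivalent to $H_{\mathrm{sing\,supp}\,v}(\xi)\le H_{K}(\xi)$ for every $\xi$, where $H_{K}(\xi)=\sup_{x\in K}\langle x,\xi\rangle$ is the supporting function. Since $H_{K-L}(\xi)=H_{K}(\xi)+H_{L}(-\xi)$, the claim is equivalent to
\[
H_{\mathrm{sing\,supp}\,v}(\xi)\le H_{\mathrm{sing\,supp}(u*v)}(\xi)+H_{\mathrm{sing\,supp}\,u}(-\xi),\qquad\xi\in\mathbb{R}^{n}.
\]
Thus it suffices to fix a direction $\xi$ and bound from above how far $\mathrm{sing\,supp}\,v$ extends in that direction.

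Next I would invoke the Fourier--Laplace characterization of the singular support: for $w\in\mathcal{E}'(\mathbb{R}^{n})$ a quantitative Paley--Wiener--Schwartz theorem describes $H_{\mathrm{sing\,supp}\,w}$ in terms of the decay of $\hat{w}(\zeta)$ in logarithmic tubes over the real axis. Roughly, $w$ is smooth in the open half-space $\{\langle x,\xi\rangle>t\}$ precisely when $\hat{w}(\eta+is\xi)$ is rapidly decreasing on the region $\{\,|\mathrm{Im}\,\zeta|\le A\log(2+|\eta|)\,\}$ with an exponential weight governed by $t$. The decisive point is that these are the very same logarithmic neighborhoods that appear in the invertibility condition (i) of Theorem \ref{thm:invertible}, which is what makes the two compatible.

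The step where invertibility does the work is the division. Writing $\hat{v}=\widehat{u*v}/\hat{u}$ at points where $\hat{u}\ne0$, the lower bound supplied by invertibility (Theorem \ref{thm:invertible}(i), equivalently Remark \ref{rem:invertibility}), namely that $|\hat{u}|$ exceeds a negative power of $(A+|\xi|)$ somewhere in each logarithmic neighborhood, controls the quotient: it forces any rapid decay of $\widehat{u*v}$ in a log-tube to produce a comparable decay of $\hat{v}$, up to polynomial factors that are irrelevant for singular-support purposes. Feeding this back through the characterization of the previous paragraph and tracking the exponential weights yields the displayed directional inequality, the weight carried by $\hat{u}$ contributing exactly the term $H_{\mathrm{sing\,supp}\,u}(-\xi)$.

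The main obstacle is this last step: passing from a lower bound for $|\hat{u}|$ at a single point of each logarithmic neighborhood to a genuine upper bound for $\hat{v}$ \emph{throughout} the tube. One favorable point does not bound the quotient everywhere, so the pointwise lower bound must be combined with the a priori Paley--Wiener upper bound for $\hat{v}$ (from $v\in\mathcal{E}'$) and a minimum-modulus or subharmonicity argument to propagate the estimate across the neighborhood. This is the analytic heart of Hörmander's invertibility theory, and it is also why the cruder fundamental-solution identity $v=E*(u*v)$ is too lossy here: for invertible $u$ the singular support of $E$ is typically unbounded, so that route cannot recover the sharp hull $-\mathrm{ch}\,\mathrm{sing\,supp}\,u$.
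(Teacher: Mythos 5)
A preliminary remark: the paper does not prove this statement at all --- it is quoted verbatim from H\"ormander \cite[Corollary 16.3.15]{Hormander2} --- so the only thing your attempt can be measured against is H\"ormander's own argument. Your outline does follow that argument in its overall architecture: reduce the inclusion to the directional inequality $H_{\mathrm{sing\,supp}\,v}(\xi)\le H_{\mathrm{sing\,supp}(u*v)}(\xi)+H_{\mathrm{sing\,supp}\,u}(-\xi)$ (your supporting-function algebra, including $H_{K-L}(\xi)=H_{K}(\xi)+H_{L}(-\xi)$, is correct, and the degenerate case $u*v\in\mathcal{C}^{\infty}$ is exactly statement (iii) of Theorem \ref{thm:invertible}); characterize $H_{\mathrm{ch}\,\mathrm{sing\,supp}}$ by the decay of the Fourier--Laplace transform in logarithmic tubes; and divide $\widehat{u*v}$ by $\hat{u}$ using invertibility. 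Note also that invertibility forces $\mathrm{sing\,supp}\,u\ne\emptyset$, so the right-hand side is never vacuously empty when $u*v$ is not smooth.

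The problem is that the proposal stops precisely where the proof begins. The two load-bearing ingredients are only named, not established: first, the quantitative refinement of the Paley--Wiener--Schwartz theorem identifying $H_{\mathrm{ch}\,\mathrm{sing\,supp}\,w}$ with the optimal exponential weight for which $\hat{w}$ is rapidly decreasing in the logarithmic tubes; and, above all, the minimum-modulus (harmonic-measure) lemma that converts the invertibility hypothesis --- a lower bound for $|\hat{u}|$ at \emph{one} favourable point of each logarithmic ball --- into an upper bound for the quotient $\widehat{u*v}/\hat{u}$ \emph{throughout} a slightly smaller ball, with only a polynomially bounded loss. You correctly flag this as ``the analytic heart'' and correctly explain why the cruder fundamental-solution identity $v=E*(u*v)$ is too lossy, but flagging the obstacle is not the same as overcoming it: without the minimum-modulus estimate the division step is unjustified, and everything preceding it is routine convex-geometry bookkeeping that never uses the hypothesis. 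As it stands the proposal is an accurate road map of H\"ormander's proof rather than a proof; to complete it you would need to state and prove (or at least precisely cite) those two lemmas and then carry out the bookkeeping of the exponential weights that produces the term $H_{\mathrm{sing\,supp}\,u}(-\xi)$.
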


The following is an analogue of Proposition \ref{prop:delta convex}.
\begin{prop}
\label{prop:delta convexity for singsupp}The pair $(\mathbb{R}^{n},\mathbb{R}^{n})$
is $\partial^{\ell}\delta_{S(0,r)}/\partial r^{\ell}$-convex for
singular supports for $\ell\ge0$.
\end{prop}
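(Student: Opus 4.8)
The plan is to transcribe the proof of Proposition~\ref{prop:delta convex} almost verbatim, replacing ordinary supports by singular supports throughout and replacing the support version of the convex-hull identity by Theorem~\ref{thm:Corollary 16.3.5}. Write $\mu=\partial^{\ell}\delta_{S(0,r)}/\partial r^{\ell}$. By Proposition~\ref{prop:invertibility} the distribution $\mu$ is invertible, so the hypothesis of Theorem~\ref{thm:Corollary 16.3.5} is met. Moreover, by Theorem~\ref{thm:deltafouriertransform} the Fourier transform $\widehat{\mu}(\xi)=|\xi|^{\ell}j_{n/2-1}^{(\ell)}(r|\xi|)$ depends only on $|\xi|$, hence is even, so $\check{\mu}=\mu$ and $\check{\mu}$ is invertible as well. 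This means Theorem~\ref{thm:Corollary 16.3.5} may be applied to $u=\check{\mu}$.

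The first step is to identify the singular support of $\check{\mu}$. Since $\mu$ is a nonzero distribution concentrated on the sphere $\{x\in\mathbb{R}^{n}\,;\,|x|=r\}$ (the surface measure for $\ell=0$, its normal derivatives for $\ell>0$), it cannot be a smooth function on any open set meeting that sphere: such a smooth representative would vanish on the dense complement of the sphere and hence vanish identically there, contradicting membership in the support. Thus $\mathrm{sing\,supp\,}\mu=\mathrm{supp\,}\mu=\{x\in\mathbb{R}^{n}\,;\,|x|=r\}$. As this sphere is invariant under $x\mapsto-x$, the same holds for $\check{\mu}$, and therefore $\mathrm{ch}\,\mathrm{sing\,supp\,}\check{\mu}=\{x\in\mathbb{R}^{n}\,;\,|x|\le r\}$.

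With these ingredients the verification is immediate. Let $K_{1}\subset\mathbb{R}^{n}$ be an arbitrary compact set and suppose $v\in\mathcal{E}'(\mathbb{R}^{n})$ satisfies $\mathrm{sing\,supp\,}\check{\mu}*v\subset K_{1}$. Applying Theorem~\ref{thm:Corollary 16.3.5} to $u=\check{\mu}$ and using $\mathrm{sing\,supp\,}v\subset\mathrm{ch}\,\mathrm{sing\,supp\,}v$ together with $\mathrm{ch}\,\mathrm{sing\,supp\,}(\check{\mu}*v)\subset\mathrm{ch}\,K_{1}$ gives
\[
\mathrm{sing\,supp\,}v\subset\mathrm{ch}\,\mathrm{sing\,supp\,}(\check{\mu}*v)-\mathrm{ch}\,\mathrm{sing\,supp\,}\check{\mu}\subset\mathrm{ch}\,K_{1}-\{x\in\mathbb{R}^{n}\,;\,|x|\le r\}.
\]
The set $K_{2}=\mathrm{ch}\,K_{1}-\{x\in\mathbb{R}^{n}\,;\,|x|\le r\}$ is the Minkowski difference of two compact sets, hence compact, and it is manifestly independent of $v$. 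This is precisely what $\mu$-convexity for singular supports requires.

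The only nonroutine point is the identification of $\mathrm{sing\,supp\,}\mu$ with the full sphere, and even that is standard for distributions concentrated on a hypersurface; once it is in place, the argument is a line-by-line copy of Proposition~\ref{prop:delta convex}, with Theorem~\ref{thm:Corollary 16.3.5} supplying the singular-support analogue of the convex-hull inclusion. I therefore anticipate no genuine obstacle.
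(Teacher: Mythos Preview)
Your proposal is correct and follows exactly the approach the paper intends: the paper's own proof simply says ``The proof is almost the same as that of Proposition~\ref{prop:delta convex}. We can use Proposition~\ref{prop:invertibility} and Theorem~\ref{thm:Corollary 16.3.5},'' and you have spelled out precisely that argument. One small remark: for the final inclusion you only need $\mathrm{ch}\,\mathrm{sing\,supp\,}\check{\mu}\subset\{|x|\le r\}$, which already follows from $\mathrm{sing\,supp\,}\check{\mu}\subset\mathrm{supp\,}\check{\mu}=\{|x|=r\}$, so the work you do to show that the singular support equals the \emph{full} sphere, while correct, is not strictly required.
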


\begin{proof}
The proof is almost the same as that of Proposition \ref{prop:delta convex}.
We can use Proposition \ref{prop:invertibility} and Theorem \ref{thm:Corollary 16.3.5}.
\end{proof}
Our main result in this section is the following.
\begin{thm}
The convolution operator $\partial^{\ell}\delta_{S(0,r)}/\partial r^{\ell}*\colon\mathcal{D}'(\mathbb{R}^{n})\to\mathcal{D}'(\mathbb{R}^{n})$
is surjective.
\end{thm}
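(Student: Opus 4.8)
The plan is to invoke H\"ormander's characterization of surjective convolution operators on the space of distributions, namely Theorem \ref{thm:HormanderCor16.5.19}, which is tailor-made for precisely this kind of statement. Writing $\mu=\partial^{\ell}\delta_{S(0,r)}/\partial r^{\ell}$, this is a compactly supported distribution (supported on the sphere $\{|x|=r\}$), so $\mu\in\mathcal{E}'(\mathbb{R}^{n})$ and the theorem applies. With the choice $X_{1}=X_{2}=\mathbb{R}^{n}$, the hypothesis $X_{2}-\mathrm{supp}\,\mu\subset X_{1}$ required by the theorem holds trivially, since the right-hand side is all of $\mathbb{R}^{n}$. Thus Theorem \ref{thm:HormanderCor16.5.19} reduces surjectivity of $\mu*\colon\mathcal{D}'(\mathbb{R}^{n})\to\mathcal{D}'(\mathbb{R}^{n})$ to three conditions that have already been established in the preceding sections.

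First I would recall that $\mu$ is invertible: this is exactly Proposition \ref{prop:invertibility}, which follows from the lower bound for $j_{n/2-1}^{(\ell)}$ in Lemma \ref{lem:besselbound} via the sufficient condition in Remark \ref{rem:invertibility}(b). Second, $(\mathbb{R}^{n},\mathbb{R}^{n})$ is $\mu$-convex for supports by Proposition \ref{prop:delta convex}. Third, $(\mathbb{R}^{n},\mathbb{R}^{n})$ is $\mu$-convex for singular supports by Proposition \ref{prop:delta convexity for singsupp}. Feeding these three facts into Theorem \ref{thm:HormanderCor16.5.19} yields $\mu*\mathcal{D}'(\mathbb{R}^{n})=\mathcal{D}'(\mathbb{R}^{n})$, which is the asserted surjectivity.

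Since all three ingredients are already in hand, the remaining argument is purely formal, and the only point requiring care is matching the hypotheses of the cited theorem exactly (in particular checking $\mu\in\mathcal{E}'$ and the trivial containment condition). The genuine obstacle in this circle of ideas lies not in the final assembly but in the verification of $\mu$-convexity for singular supports, Proposition \ref{prop:delta convexity for singsupp}: unlike the support version, it cannot be read off from the elementary additivity of convex hulls of supports, and instead relies on the invertibility of $\mu$ together with Theorem \ref{thm:Corollary 16.3.5} on convex hulls of singular supports. Because invertibility has already been secured, that obstacle is cleared, and the proof of the present theorem is a one-line application of Theorem \ref{thm:HormanderCor16.5.19}.
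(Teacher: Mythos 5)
Your proposal is correct and coincides with the paper's own proof, which likewise applies Theorem \ref{thm:HormanderCor16.5.19} with $X_{1}=X_{2}=\mathbb{R}^{n}$ together with Propositions \ref{prop:invertibility}, \ref{prop:delta convex} and \ref{prop:delta convexity for singsupp}. Your added remarks on verifying the hypotheses and on where the real work lies (the singular-support convexity via Theorem \ref{thm:Corollary 16.3.5}) are accurate but not a departure from the paper's argument.
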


\begin{proof}
Apply Theorem \ref{thm:HormanderCor16.5.19} and Propositions \ref{prop:invertibility},
\ref{prop:delta convex} and \ref{prop:delta convexity for singsupp}.
\end{proof}

\section{Compactly supported distributions}

In this and the following sections, we restrict our consideration
to the cases of $\ell=0,1$ and $n\ge2$. 
\begin{lem}
\label{lem:implicitfunctiontheorem}Let $a(\zeta)$ and $b(\zeta)$
be holomorphic functions on an open subset $U$ of $\mathbb{C}^{n}$.
Set $Z=\left\{ \zeta\in U;\,a(\zeta)=0\right\} $ and assume $da(\zeta)\ne0$
on $Z$. If $b(\zeta)$ vanishes on $Z$, then $b(\zeta)/a(\zeta)$
is holomorphic in $U$. 
\end{lem}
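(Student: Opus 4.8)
The plan is to argue locally. Since $a\neq0$ on the open set $U\setminus Z$, the quotient $b/a$ is manifestly holomorphic there, so it suffices to show that near every point $p\in Z$ the function $b/a$ extends holomorphically across $Z$. First I would exploit the hypothesis $da(p)\neq0$: some partial derivative, say $\partial a/\partial\zeta_n(p)$, is nonzero, so by the holomorphic implicit function theorem $Z$ is, near $p$, the graph $\zeta_n=\phi(\zeta_1,\dots,\zeta_{n-1})$ of a holomorphic function $\phi$. Introducing the holomorphic change of coordinates $u_i=\zeta_i$ for $i<n$ and $u_n=\zeta_n-\phi(\zeta_1,\dots,\zeta_{n-1})$, I may assume that near $p$ the set $Z$ is exactly $\{u_n=0\}$. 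Writing $u'=(u_1,\dots,u_{n-1})$, note that in these coordinates $a(u',0)=0$, so the tangential derivatives of $a$ vanish on $Z$ and the differential there reduces to $da=(\partial a/\partial u_n)(u',0)\,du_n$; hence $da\neq0$ on $Z$ means precisely that $(\partial a/\partial u_n)(u',0)\neq0$.

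The key step is the factorization. Since $a(u',0)=0$, the integral representation
\[
a(u',u_n)=u_n\int_{0}^{1}\frac{\partial a}{\partial u_n}(u',t u_n)\,dt
\]
exhibits $a=u_n\cdot h$ with $h$ holomorphic near $p$, and $h(u',0)=(\partial a/\partial u_n)(u',0)\neq0$, so $h$ is a nonvanishing holomorphic factor in a neighborhood of $p$. Applying the same representation to $b$, which also vanishes on $\{u_n=0\}$, gives $b=u_n\cdot k$ with $k$ holomorphic near $p$. Consequently, on a punctured neighborhood of $p$ one has $b/a=k/h$, and the right-hand side is holomorphic at $p$ because $h$ does not vanish there. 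This is the desired local holomorphic extension.

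Finally I would glue the pieces. Each local extension $k/h$ agrees with $b/a$ on the dense set $U\setminus Z$, so the extensions patch together unambiguously into a single holomorphic function on $U$ that coincides with $b/a$ wherever $a\neq0$; this is what the statement means by $b/a$ being holomorphic in $U$. I expect the only genuine content to be the factorization $a=u_n h$ with $h$ a unit, which is exactly where the nondegeneracy hypothesis $da\neq0$ on $Z$ is used; the remainder is the implicit function theorem and routine bookkeeping. (As an alternative to factoring $b$ explicitly, one could observe that $b$ vanishing on the smooth hypersurface $Z$ forces $b/a$ to remain bounded near $Z$, after which Riemann's removable-singularity theorem applied along complex lines transverse to $Z$ yields the extension; but the direct factorization above is cleaner and entirely self-contained.)
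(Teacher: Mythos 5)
Your proposal is correct and follows essentially the same route as the paper: localize at a point of $Z$, use $da\neq0$ to choose holomorphic coordinates flattening $Z$, and divide out the coordinate function (the paper simply takes $a$ itself as the first coordinate, whereas you straighten $Z$ first and then write $a=u_{n}h$ with $h$ a unit, which amounts to the same thing). The extra details you supply --- the Hadamard-type integral factorization and the gluing of local extensions --- are accurate and merely make explicit what the paper leaves implicit.
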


\begin{proof}
We have only to prove the claim in a neighborhood of each point of
$Z$. Let $p\in Z$. In a neighborhood $V$ of $p$, there is a new
system of coordinates $(z_{1},\dots,z_{n})$ such that $a(\zeta)=z_{1}$,
$V\cap Z=\{z_{1}=0\}$. Then the function $b$ can be divided by $z_{1}$. 
\end{proof}
\begin{prop}
\label{prop:f_3 division}Assume $n\ge2$. Set $f_{1}(\zeta)=j_{n/2}\bigl(r\sqrt{\zeta^{2}}\bigr)$,
$f_{2}(\zeta)=\zeta^{2}$. Let $Z=\left\{ \zeta\in\mathbb{C}^{n};\,f_{1}(\zeta)f_{2}(\zeta)=0\right\} $.
If $g(\zeta)$ is holomorphic in a neighborhood of $Z$ and $g(\zeta)=0$
on $Z$, then $g(\zeta)/[f_{1}(\zeta)f_{2}(\zeta)]$ is holomorphic. 
\end{prop}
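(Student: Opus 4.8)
The plan is to reduce the Proposition to Lemma \ref{lem:implicitfunctiontheorem} at the smooth points of $Z$ and to treat the single bad point, the origin, by a separate square-free divisibility argument. The first thing I would record is that the two factors have \emph{disjoint} zero sets. Writing $\phi(w)=j_{n/2}(r\sqrt{w})$, which is entire in $w$ because $j_{n/2}$ is even, we have $f_1(\zeta)=\phi(\zeta^2)$. Since the zeros of $j_{n/2}$ are the real, simple, nonzero points $\pm x_j$, we get $\phi(w)=0$ exactly when $w=x_j^2/r^2>0$; in particular $\phi(0)=j_{n/2}(0)=1\ne0$. Hence $Z_1:=\{f_1=0\}$ never meets $Z_2:=\{\zeta^2=0\}$, so $Z=Z_1\sqcup Z_2$ and the two pieces can be analyzed independently. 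Off $Z$ the quotient $g/[f_1f_2]$ is automatically holomorphic, so it suffices to check holomorphy near each point of $Z$.

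Near a point $p\in Z_1$ one has $p^2=x_j^2/r^2\ne0$, hence $p\ne0$ and $f_2(p)=p^2\ne0$; so $f_2$ is a holomorphic nonvanishing factor there and it is enough to show $g/f_1$ is holomorphic. For this I apply Lemma \ref{lem:implicitfunctiontheorem} with $a=f_1$, which needs $df_1\ne0$ on $Z_1$. From $df_1=\phi'(\zeta^2)\,d(\zeta^2)=2\phi'(\zeta^2)\sum_k\zeta_k\,d\zeta_k$, nonvanishing requires $\phi'(p^2)\ne0$ and $p\ne0$; the latter is already known, and the former holds because $w\mapsto r\sqrt{w}$ is a local biholomorphism near the positive point $w=x_j^2/r^2$ while $j_{n/2}$ has a simple zero at $x_j$, so $\phi$ has a simple zero there. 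Near a point $p\in Z_2$ with $p\ne0$ the situation is dual: $f_1(p)=\phi(0)=1\ne0$ and $df_2(p)=2p\ne0$, so applying Lemma \ref{lem:implicitfunctiontheorem} with $a=f_2$ on a small neighborhood omitting the origin shows $g/f_2$, hence $g/[f_1f_2]$, is holomorphic there.

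The remaining point is the origin $\zeta=0\in Z_2$, where $d(\zeta^2)=0$ and Lemma \ref{lem:implicitfunctiontheorem} does not apply. Here I use that $f_1$ is holomorphic and nonvanishing near $0$, set $h:=g/f_1$, and reduce to showing $h/\zeta^2$ is holomorphic, knowing only that $h$ is holomorphic near $0$ and vanishes on the cone $\{\zeta^2=0\}$. This is a local divisibility statement, and its crux is that the form $\zeta_1^2+\dots+\zeta_n^2$ is square-free for every $n\ge2$: for $n=2$ it splits as $(\zeta_1+i\zeta_2)(\zeta_1-i\zeta_2)$, a product of two distinct primes, and for $n\ge3$ it is irreducible, hence prime in the UFD of germs at $0$. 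In either case the ideal $(\zeta^2)$ is radical, so the analytic Nullstellensatz forces any germ vanishing on $\{\zeta^2=0\}$ to lie in $(\zeta^2)$; thus $\zeta^2\mid h$ and $h/\zeta^2$ is holomorphic at $0$.

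I expect this last step, the behaviour at the singular point of the cone, to be the main obstacle, since it is the only place where the hypersurface fails to be smooth and the clean implicit-function argument of Lemma \ref{lem:implicitfunctiontheorem} breaks down; everything else is a matter of verifying the nonvanishing of the relevant differentials and exploiting the disjointness $Z_1\cap Z_2=\emptyset$.
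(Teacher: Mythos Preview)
Your argument is correct. At the smooth points of $Z$ it coincides with the paper's: both split $Z=Z_1\sqcup Z_2$, observe $Z_1\cap Z_2=\emptyset$ because $j_{n/2}(0)=1$, and invoke Lemma~\ref{lem:implicitfunctiontheorem} twice (your verification that $df_1\ne0$ on $Z_1$, via simplicity of the zeros of $j_{n/2}$ and $p\ne0$, is more explicit than the paper's, which simply asserts the Lemma applies).

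The genuine divergence is at the origin. You treat it algebraically: since $\zeta_1^2+\dots+\zeta_n^2$ is square-free in $\mathbb{C}\{\zeta\}$ for $n\ge2$ (reducible into two distinct linear factors when $n=2$, irreducible when $n\ge3$), the ideal $(\zeta^2)$ is radical and the analytic Nullstellensatz forces $\zeta^2\mid h$. The paper instead observes that once $g/[f_1f_2]$ is holomorphic on a punctured neighborhood of $0$, the origin is an \emph{isolated} singularity in $\mathbb{C}^n$ with $n\ge2$, and Hartogs's extension theorem removes it for free. The Hartogs route is shorter and avoids any case split on $n$ or appeal to the local Nullstellensatz; your route, on the other hand, actually exhibits the divisibility $\zeta^2\mid h$ in the local ring and makes transparent why $n\ge2$ is needed (for $n=1$ the form $\zeta_1^2$ is a square and the conclusion fails). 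Both arguments use $n\ge2$ in an essential way, just through different classical theorems.
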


\begin{proof}
Set $Z_{j}=\{\zeta\in\mathbb{C}^{n};\,f_{j}(\zeta)=0\}\,(j=1,2)$.
Then 
\[
Z=Z_{1}\cup Z_{2},\,Z_{1}\cap Z_{2}=\emptyset,\,0\not\in Z_{1},\,0\in Z_{2}.
\]

By Lemma \ref{lem:implicitfunctiontheorem}, $g(\zeta)/f_{1}(\zeta)$
is holomorphic in a neighborhood of $Z$. By Lemma \ref{lem:implicitfunctiontheorem}
again, $g(\zeta)/[f_{1}(\zeta)f_{2}(\zeta)]$ is holomorphic in a
neighborhood of $Z\setminus\left\{ 0\right\} $. Therefore $0$ is
an isolated singularity. By Hartogs's extension theorem, it is removable.
\end{proof}
\begin{thm}
\label{thm:range E'}Assume $\ell=0,1$ and $n\ge2$. Then we have
the following characterization of the range of the endomorphism $\partial^{\ell}\delta_{S(0,r)}/\partial r^{\ell}*$
on $\mathcal{E}'(\mathbb{R}^{n})$. 
\begin{align}
 & \mathrm{range}\left(\partial^{\ell}\delta_{S(0,r)}/\partial r^{\ell}*\colon\mathcal{E}'(\mathbb{R}^{n})\to\mathcal{E}'(\mathbb{R}^{n})\right)\label{eq:rangeformula}\\
 & =\left\{ w\in\mathcal{E}'(\mathbb{R}^{n});\,\hat{w}(\zeta)=0\;\text{if}\;(\partial^{\ell}\delta_{S(0,r)}/\partial r^{\ell})^{\wedge}(\zeta)=0\right\} .\nonumber 
\end{align}
 The case of $\ell=0,n\ge1$ was proved in \cite[Theorem 2.17]{Lim}.
\end{thm}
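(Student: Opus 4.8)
The plan is to prove the two inclusions in \eqref{eq:rangeformula} separately, writing $\mu_{\ell}=\partial^{\ell}\delta_{S(0,r)}/\partial r^{\ell}$ and $\hat{\mu}_{\ell}$ for its Fourier transform. The inclusion $\subset$ is immediate: if $w=\mu_{\ell}*v$ with $v\in\mathcal{E}'(\mathbb{R}^{n})$, then $\hat{w}=\hat{\mu}_{\ell}\,\hat{v}$, so $\hat{w}(\zeta)=0$ wherever $\hat{\mu}_{\ell}(\zeta)=0$. The substance of the theorem is the reverse inclusion $\supset$, which I would reduce to a holomorphic division problem.

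For $\supset$, suppose $w\in\mathcal{E}'(\mathbb{R}^{n})$ has $\hat{w}$ vanishing on the zero set of $\hat{\mu}_{\ell}$. Since $\mu_{\ell}$ is invertible (Proposition \ref{prop:invertibility}), it suffices to show that the quotient $\hat{w}/\hat{\mu}_{\ell}$ is holomorphic on all of $\mathbb{C}^{n}$: statement (ii) of Theorem \ref{thm:invertible} then produces a $v\in\mathcal{E}'(\mathbb{R}^{n})$ with $\hat{v}=\hat{w}/\hat{\mu}_{\ell}$, and taking Fourier transforms gives $\mu_{\ell}*v=w$. Note that $\hat{w}$ is entire by the Paley--Wiener theorem, so the only issue is divisibility across the zeros of $\hat{\mu}_{\ell}$.

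The holomorphy of the quotient is where the cases $\ell=0$ and $\ell=1$ diverge, and it is the main obstacle. For $\ell=0$, Corollary \ref{cor:deltafourier} gives $\hat{\mu}_{0}(\zeta)=j_{n/2-1}(r\sqrt{\zeta^{2}})$. Because every zero of $j_{n/2-1}$ is real, simple and nonzero, the zero set of $\hat{\mu}_{0}$ lies in $\{\zeta^{2}\ne0\}$, where $\sqrt{\zeta^{2}}$ is locally holomorphic; there $d\hat{\mu}_{0}$ is a nonzero multiple of $d(\zeta^{2})=2\sum_{i}\zeta_{i}\,d\zeta_{i}\ne0$, so $d\hat{\mu}_{0}\ne0$ on the zero set. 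Lemma \ref{lem:implicitfunctiontheorem} with $a=\hat{\mu}_{0}$, $b=\hat{w}$ then gives that $\hat{w}/\hat{\mu}_{0}$ is entire, recovering \cite[Theorem 2.17]{Lim}.

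For $\ell=1$, Corollary \ref{cor:deltafourier} gives $\hat{\mu}_{1}(\zeta)=-\tfrac{r}{n}\,\zeta^{2}\,j_{n/2}(r\sqrt{\zeta^{2}})$, so up to the nonzero constant $-r/n$ this is exactly the product $f_{1}(\zeta)f_{2}(\zeta)$ of Proposition \ref{prop:f_3 division}. The zero set of $\hat{\mu}_{1}$ therefore equals $Z=Z_{1}\cup Z_{2}$ there, and the hypothesis that $\hat{w}$ vanishes on it is precisely the hypothesis $g=0$ on $Z$ of that proposition. Applying Proposition \ref{prop:f_3 division} to $g=\hat{w}$ shows that $\hat{w}/[f_{1}f_{2}]$, hence $\hat{w}/\hat{\mu}_{1}$, is entire. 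The delicate point handled there is the origin: at $0$ the factor $\zeta^{2}$ has vanishing differential, so Lemma \ref{lem:implicitfunctiontheorem} does not apply directly; one first divides off $f_{1}$ and $f_{2}$ away from $0$ and then removes the resulting isolated singularity at the origin by Hartogs's extension theorem. Once holomorphy of $\hat{w}/\hat{\mu}_{\ell}$ is established in both cases, the reduction of the second paragraph completes the proof.
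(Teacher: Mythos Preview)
Your proposal is correct and follows essentially the same route as the paper: both prove the easy inclusion via $\hat{w}=\hat{\mu}_{\ell}\hat{v}$, then for the reverse inclusion use Corollary~\ref{cor:deltafourier} together with Lemma~\ref{lem:implicitfunctiontheorem} (for $\ell=0$) or Proposition~\ref{prop:f_3 division} (for $\ell=1$) to show $\hat{w}/\hat{\mu}_{\ell}$ is entire, and then invoke invertibility and Theorem~\ref{thm:invertible}\,(ii) to realize the quotient as $\hat{v}$ for some $v\in\mathcal{E}'(\mathbb{R}^{n})$. Your write-up simply spells out the $\ell=0$ division argument and the role of the Hartogs step at the origin in more detail than the paper does.
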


\begin{proof}
Here we show the case of $\ell=1$. The proof of the case of $\ell=0$
is easier. Let the left- and right-hand sides of \eqref{eq:rangeformula}
be $R$ and $W$ respectively. The Fourier transform of $\partial\delta_{S(0,r)}/\partial r*v$
is $(\partial\delta_{S(0,r)}/\partial r)^{\wedge}(\zeta)\hat{v}(\zeta)$
and the inclusion $R\subset W$ follows immediately. 

Next we show $R\supset W$. If $w$ belongs to $W$, then $\hat{w}(\zeta)/(\partial\delta_{S(0,r)}/\partial r)^{\wedge}(\zeta)$
is entire by Corollary \ref{cor:deltafourier} and Proposition \ref{prop:f_3 division}.
By Proposition \ref{prop:invertibility} and Theorem \ref{thm:invertible},
$\hat{w}(\zeta)/(\partial\delta_{S(0,r)}/\partial r)^{\wedge}(\zeta)=\hat{v}(\zeta)$
for some distribution $v$ in $\mathcal{E}'(\mathbb{R}^{n})$. Then
$\hat{w}(\zeta)=(\partial\delta_{S(0,r)}/\partial r)^{\wedge}(\zeta)\hat{v}(\zeta)$
and $w=\partial\delta_{S(0,r)}/\partial r*v$. 
\end{proof}

\section{Compactly supported smooth functions}
\begin{thm}
Assume $\ell=0,1$ and $n\ge2$. Then we have the following characterization
of the range of the endomorphism $\partial^{\ell}\delta_{S(0,r)}/\partial r^{\ell}*$
on $\mathcal{\mathcal{C}}_{0}^{\infty}(\mathbb{R}^{n})$. 
\begin{align}
 & \mathrm{range}\left(\partial^{\ell}\delta_{S(0,r)}/\partial r^{\ell}*\colon\mathcal{\mathcal{C}}_{0}^{\infty}(\mathbb{R}^{n})\to\mathcal{\mathcal{C}}_{0}^{\infty}(\mathbb{R}^{n})\right)\label{eq:rangeformula-1}\\
 & =\left\{ w\in\mathcal{\mathcal{C}}_{0}^{\infty}(\mathbb{R}^{n});\,\hat{w}(\zeta)=0\;\text{if}\;(\partial^{\ell}\delta_{S(0,r)}/\partial r^{\ell})^{\wedge}(\zeta)=0\right\} .\nonumber 
\end{align}
 The case of $\ell=0,n\ge1$ was proved in \cite[Theorem 2.23]{Lim}.
\end{thm}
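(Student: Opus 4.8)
The plan is to deduce the statement from the compactly-supported distribution case already established in Theorem \ref{thm:range E'}, promoting the resulting distributional solution to a smooth one by means of the invertibility of the kernel. Throughout I write $\mu = \partial^\ell\delta_{S(0,r)}/\partial r^\ell$ and let $R$ and $W$ denote the left- and right-hand sides of \eqref{eq:rangeformula-1}, respectively. As in Theorem \ref{thm:range E'}, I would treat $\ell = 1$ in detail, the case $\ell = 0$ being entirely analogous.

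The inclusion $R \subset W$ is immediate: if $w = \mu * v$ with $v \in \mathcal{C}_0^\infty(\mathbb{R}^n)$, then $\hat{w}(\zeta) = \hat{\mu}(\zeta)\,\hat{v}(\zeta)$ for all $\zeta \in \mathbb{C}^n$, so $\hat{w}$ vanishes wherever $\hat{\mu}$ does, whence $w \in W$.

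For the reverse inclusion $W \subset R$, take $w \in W$. Since $W \subset \mathcal{C}_0^\infty(\mathbb{R}^n) \subset \mathcal{E}'(\mathbb{R}^n)$ and the vanishing condition defining $W$ coincides with the one defining the right-hand side of \eqref{eq:rangeformula}, the function $w$ lies in the range described in Theorem \ref{thm:range E'}. That theorem then furnishes a distribution $v \in \mathcal{E}'(\mathbb{R}^n)$ with $\mu * v = w$. It remains only to check that $v$ is smooth. Here I would invoke the invertibility of $\mu$, established in Proposition \ref{prop:invertibility}: because $\mu * v = w \in \mathcal{C}_0^\infty(\mathbb{R}^n) \subset \mathcal{C}^\infty(\mathbb{R}^n)$, condition (iii) of Theorem \ref{thm:invertible} applies and gives $v \in \mathcal{C}^\infty(\mathbb{R}^n)$. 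Since $v$ already has compact support, $v \in \mathcal{C}_0^\infty(\mathbb{R}^n)$, and therefore $w = \mu * v \in R$.

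I do not anticipate a genuine obstacle, since all the analytic work has been carried out in the preceding sections; the only point requiring care is the recognition that statement (iii) of Theorem \ref{thm:invertible} is precisely the regularity mechanism needed. The alternative of estimating $\hat{v} = \hat{w}/\hat{\mu}$ directly and invoking Paley--Wiener--Schwartz would force one to control $1/|\hat{\mu}|$ in spite of the zeros of $\hat{\mu}$ and the merely logarithmic-scale lower bounds available from invertibility; routing the argument through the qualitative statement (iii) avoids these estimates entirely.
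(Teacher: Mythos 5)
Your proposal is correct and follows exactly the paper's own argument: reduce to Theorem \ref{thm:range E'} to obtain $v\in\mathcal{E}'(\mathbb{R}^{n})$ with $\mu*v=w$, then use the invertibility of $\mu$ (Proposition \ref{prop:invertibility}) together with condition (iii) of Theorem \ref{thm:invertible} to conclude $v\in\mathcal{C}^{\infty}(\mathbb{R}^{n})$, hence $v\in\mathcal{E}'(\mathbb{R}^{n})\cap\mathcal{C}^{\infty}(\mathbb{R}^{n})=\mathcal{C}_{0}^{\infty}(\mathbb{R}^{n})$. No changes are needed.
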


\begin{proof}
Here we show the case of $\ell=1$. The proof of the case of $\ell=0$
is easier. Let the left- and right-hand sides be $R'$ and $W'$ respectively.
The inclusion $R'\subset W'$ is trivial. 

Next we show $R'\supset W'$. If $w$ belongs to $W'$, then $w=\partial\delta_{S(0,r)}/\partial r*v$
for some distribution $v$ in $\mathcal{E}'(\mathbb{R}^{n})$ by Theorem
\ref{thm:range E'}. Since $w\in\mathcal{\mathcal{C}}_{0}^{\infty}(\mathbb{R}^{n})\subset\mathcal{\mathcal{C}}^{\infty}(\mathbb{R}^{n})$,
the invertibility of $\partial\delta_{S(0,r)}/\partial r$ and Theorem
\ref{thm:invertible} (iii) imply that $v\in\mathcal{\mathcal{C}}^{\infty}(\mathbb{R}^{n})$.
Hence $v\in\mathcal{E}'(\mathbb{R}^{n})\cap\mathcal{\mathcal{C}}^{\infty}(\mathbb{R}^{n})=\mathcal{\mathcal{C}}_{0}^{\infty}(\mathbb{R}^{n})$
and $w\in R'$. 
\end{proof}


\begin{thebibliography}{1}
\bibitem{MR2890341}Abanin, A. V., Ishimura, R. and Khoi, L. H., Convolution
operators in $A^{-\infty}$ for convex domains, Ark. Mat., 50(1) (2012),
1-22.

\bibitem{Agranovsky}Agranovsky, M., Finch, D. and Kuchment, P., Range
conditions for a spherical mean transform, Inverse Probl. Imaging
3 (3) (2009), 373--382.

\bibitem{Antipov}Antipov, Y., Estrada, R. and Rubin, B., Method of
analytic continuation for the inverse spherical mean transform in
constant curvature spaces, J. Anal. Math. 118 (2) (2012), 623--656.

\bibitem{Kakehi}Christensen, J., Gonzalez, F. and Kakehi, T., Surjectivity
of mean value operators on noncompact symmetric space, J. Funct. Anal.,
272 (2017), 3610-3646.

\bibitem{MR119082}Ehrenpreis, L., Solution of some problems of division
IV. Invertible and   elliptic operators, Amer. J. Math., 82 (1960),
522-588.

\bibitem{Helgason}Helgason, S., Groups and Geometric Analysis, American
Mathematical Society, 2000. 

\bibitem{MR141984}H\"ormander, L., On the range of convolution operators.
Ann. of Math. (2), 76 (1962), 148-170.

\bibitem{Hormander1}H\"ormander, L., The analysis of linear partial
differential operators I, Second Edition, Springer-Verlag, 1990.

\bibitem{Hormander2}H\"ormander, L., The analysis of linear partial
differential operators II, Springer-Verlag, 1983.

\bibitem{MR1294464}Ishimura, R. and Okada, Y., The existence and
the continuation of holomorphic solutions for   convolution equations
in tube domains, Bull. Soc. Math. France, 122(3) (1994), 413-433.

\bibitem{MR0226148}Korobe\u{\i}nik, J. F., Solutions of certain
functional equations in classes of functions analytic in convex domains,
Mat. Sb. (N.S.), 75 (117) (1968), 225-234.

\bibitem{MR0298200}Kawai, T., On the theory of Fourier hyperfunctions
and its applications to partial differential equations with constant
coefficients, J. Fac. Sci. Univ. Tokyo Sect. I A Math., 17 (1970),
467-517.

\bibitem{MR3066407}Langenbruch, M., Convolution operators on spaces
of real analytic functions, Math. Nachr., 286(8-9) (2013), 908-920.

\bibitem{Lim}Lim, K-T., The spherical mean value operators on Euclidean
and hyperbolic spaces, PhD thesis, Tufts University, Medford, MA,
2012. ProQuest Dissertations Publishing, 2012. 3546361.

\bibitem{MR86990}Malgrange, B. Existence et approximation des solutions des \'{e}quations aux   d\'{e}riv\'{e}es partielles et des \'{e}quations de convolution.
Ann. Inst. Fourier (Grenoble), 6 (1955/56), 271-355. 

\bibitem{dlmf}NIST Digital Library of Mathematical Functions, https://dlmf.nist.gov/ 
\end{thebibliography}
\end{document}